\documentclass[11pt]{article}

\usepackage[noquiver]{kyrem1sty}

\usepackage[backend=biber,style=alphabetic,doi=false,isbn=false,url=false,eprint=false]{biblatex}
\hypersetup{%
  pdftitle={The Area Asymptotics of (sn,n)-Dyck Paths},%
  pdfauthor={Evan Conway and James Harbour},%
  colorlinks=true,%
  linkcolor=blue,%
  citecolor=blue,%
  filecolor=blue,%
  menucolor=blue,%
  urlcolor=blue,%
  pdfnewwindow=true,%
  pdfstartview=FitBH
}   

\usepackage{tikz}
\usetikzlibrary{snakes, arrows.meta,shapes,patterns,calc,decorations.pathreplacing,positioning,decorations.markings}

\usepackage{ytableau}
\usepackage{caption}

\newcommand{\DrawDyckPath}[3]{%
    \begin{tikzpicture}
        \draw[help lines, color=gray!30] (0,0) grid (#1*#2,#2);
        
        \draw[very thick, color=black] #3;
        \draw[dashed, very thick, color=black] (0,0) -- (#1*#2,#2);
    \end{tikzpicture}
}

\newcommand{\DrawShadedDyckPath}[3]{%
    \begin{tikzpicture}
        \draw[help lines, color=gray!30] (0,0) grid (#1*#2,#2);
        
        \draw[very thick, color=black] #3;
        \draw[dashed, very thick, color=black] (0,0) -- (#1*#2,#2);
        \fill[pattern=north west lines, pattern color=gray] #3 -- cycle;
    \end{tikzpicture}
}

\usepackage{subfiles}

\usepackage{geometry}

\title{The Area Asymptotics of $(sn,n)$-Dyck Paths}
\author{Evan Conway \and James Harbour}
\date{}

\begin{document}
\maketitle

\begin{abstract}
We study the total and average area of $(sn,n)$-Dyck paths: lattice paths from $(0,0)$ to $(sn,n)$ that stay weakly below the line $x=sy$, counted by the Fuss-Catalan numbers. Generalizing a result of Merlini, Sprugnoli, and Verri for the case $s=1$, we derive an exact formula for the total area over all such paths. From this we obtain explicit upper and lower bounds for both the total and the average area, together with the corresponding asymptotics: for fixed $s$, the average area is asymptotic to $\sqrt{\pi s(s+1)/8} \cdot n^{3/2}$, while for fixed $n$ it is asymptotic to $sn \cdot Q(n)/2$ as $s$ grows large, where $Q(n)$ denotes Ramanujan's $Q$-function. Along the way, we confirm a conjecture of Kotesovec on the asymptotics of a binomial sum that also arises in several other enumeration problems.
\end{abstract}

\noindent\textbf{Keywords:} Dyck paths, Fuss-Catalan numbers, lattice path enumeration, area statistic, asymptotics

\noindent\textbf{MSC 2020:} 05A15, 05A16 (primary); 05A10 (secondary)

\section{Introduction}

\subsection{Motivation}

Dyck paths---lattice paths that go from $(0,0)$ to $(n,n)$ and lie weakly below the line $x=y$---are a fundamental class of lattice paths counted by the Catalan numbers, one of the most ubiquitous sequences in combinatorics. The Catalan numbers enumerate numerous combinatorial structures, including binary trees, parenthesizations, and noncrossing partitions, with Dyck paths providing a natural geometric interpretation. This deep connection has made Dyck paths a central object of study across various mathematical disciplines.

One area of intense interest has been the statistics of Dyck paths: numerical properties, such as the area, that can be derived from these paths. The study of these statistics has been prominent in a variety of fields, such as algebraic combinatorics (\cite{haglund2008q}), probability (\cite{bernoulli_excursion}), and computer science (\cite{banderier2006area}).

Another common direction of research has been various variants of Dyck paths. In particular, one of the most studied variants has been $(sn,n)$-Dyck paths, paths that go from $(0,0)$ to $(sn,n)$ and lie weakly beneath the line $x=sy$. One major advantage of this generalization is that $(sn,n)$-Dyck paths are enumerated by the Fuss-Catalan numbers, which are perhaps the most natural generalization of the Catalan numbers (\cite{armstrong2007generalizednoncrossingpartitionscombinatorics}). These numbers have been studied for a surprisingly long time, with the first paper on the topic being published in 1791 (\cite{przytycki1998polygon}). This connection makes $(sn,n)$-Dyck paths a valuable tool for studying properties of objects enumerated by the Fuss-Catalan numbers, such as $k$-ary trees or polygon decompositions.

In this paper, we combine elements from these two areas of research by investigating the area of $(sn,n)$-Dyck paths. As part of this, we seek to generalize prior area results for Dyck paths from Merlini et al. (\cite{merlini}), which established an exact formula and asymptotics for the total area over all Dyck paths with fixed $n$.

The area below lattice paths has also been studied using analytic combinatorics. Most relevantly, Banderier and Gittenberger \cite{banderier2006area} use the kernel method to derive generating functions and asymptotics for the area below directed lattice paths, a class that includes $(sn,n)$-Dyck paths. In particular, their results provide an alternative route to the area generating function that we derive in Section \ref{sec:total_area_formula}.

Our research also draws connections to previous investigations into properties of polyominoes (\cite{mansour2021enumerations}) and the tennis ball problem (\cite{MERLINI2002307}); in several cases, we find ways to simplify and extend these related results. In addition, our asymptotics for large $s$ reveal a connection to Ramanujan's $Q$-function and the classical birthday problem (\cite{flajolet1995ramanujan}).

\subsection{Main Results}

Our first main result is an exact formula for the total area $A^s_n$ of $(sn,n)$-Dyck paths, mirroring the known formula for the $s = 1$ case. We prove it in Theorem \ref{thm:exact_area} by combining two existing touchpoint decompositions: the one used by Merlini et al. to find the total area of Dyck paths (\cite{merlini}, Theorem 2.1), and the one used by Krattenthaler to count $(sn,n)$-Dyck paths (\cite{krattenthaler2015lattice}, second proof of Theorem 10.4.5).
\begin{equation}\begin{split}
    A^s_n = \frac{1}{2} \lr{\sum_{k=0}^{n} {(s+1)k \choose k} {(s+1)(n-k) \choose n-k} - {(s+1)n+1 \choose n}}
\end{split}\end{equation}

Our second main result consists of explicit upper and lower bounds for both the total area (Theorem \ref{thm:total_area_bounds}) and the average area (Theorem \ref{thm:average_area_bounds}), valid for all $n \geq 2$. These bounds immediately yield the asymptotics in $n$ of both quantities (Corollaries \ref{cor:total_n_asymp} and \ref{cor:average_n_asymp}); in particular, writing $F^s_n$ for the $n$-th Fuss-Catalan number, the average area satisfies
\begin{equation}\begin{split}
    \frac{A^s_n}{F^s_n} \sim \frac{1}{2} \sqrt{\frac{\pi}{2}} \sqrt{s(s+1)} \cdot n^{\frac{3}{2}}
\end{split}\end{equation}
As part of the derivation, we prove bounds and asymptotics for the sum $\sum_{k=0}^{n} {(s+1)k \choose k} {(s+1)(n-k) \choose n-k}$, an expression that appears surprisingly often in the literature (see \cite{MERLINI2002307} for one interesting case, and \cite{oeisA006256} for other examples); this resolves a conjecture of Vaclav Kotesovec.

Our final main result concerns the opposite regime, where $n$ is fixed and $s$ grows large. We find bounds for the total and average area suited to this regime (Lemmas \ref{lem:total_area_bounds_s} and \ref{lem:average_area_bounds_s}), leading to the asymptotics in $s$ of the average area (Corollary \ref{cor:average_s_asymp}):
\begin{equation}\begin{split}
    \frac{A^s_n}{F^s_n} \sim \frac{1}{2} sn \cdot Q(n)
\end{split}\end{equation}
where $Q(n)$ is Ramanujan's $Q$-function. This connects the average area to the classical birthday problem, as discussed in Section \ref{sec:s_average_area}.

\section{Background}

\subsection{$(n,n)$-Dyck Paths}

\begin{definition}[Dyck Path]
A Dyck path is a walk in the $(x,y)$ plane from $(0,0)$ to $(n,n)$ with steps $(1,0)$ or $(0,1)$ that lies weakly beneath the line $x=y$. We use $D_n$ to denote the set of Dyck paths.
\end{definition}

\begin{remark}
We will also call these $(n,n)$-Dyck paths to distinguish them from the $(sn,n)$-Dyck paths that we will introduce later.
\end{remark}

\begin{example} For $n = 3$, we have the following five Dyck paths

\begin{minipage}{0.19\textwidth}
\DrawDyckPath{1}{3}{(0,0) -- (1,0) -- (1,1) -- (2,1) -- (2,2) -- (3,2) -- (3,3)}
\end{minipage}%
\begin{minipage}{0.19\textwidth}
\DrawDyckPath{1}{3}{(0,0) -- (1,0) -- (1,1) -- (2,1) -- (3,1) -- (3,2) -- (3,3)}
\end{minipage}%
\begin{minipage}{0.19\textwidth}
\DrawDyckPath{1}{3}{(0,0) -- (1,0) -- (2,0) -- (2,1) -- (2,2) -- 
(3,2) -- (3,3)}
\end{minipage}%
\begin{minipage}{0.19\textwidth}
\DrawDyckPath{1}{3}{(0,0) -- (1,0) -- (2,0) -- (2,1) -- (3,1) -- 
(3,2) -- (3,3)}
\end{minipage}%
\begin{minipage}{0.19\textwidth}
\DrawDyckPath{1}{3}{(0,0) -- (1,0) -- (2,0) -- (3,0) -- (3,1) -- 
(3,2) -- (3,3)}
\end{minipage}
\end{example}

\begin{remark}
The number of Dyck paths for a particular $n$ is given by the Catalan numbers $C_n$, which are defined as
\begin{equation}\begin{split}
    C_n := \frac{1}{n+1} {2n \choose n}
\end{split}\end{equation}
\end{remark}

In addition to enumerating Dyck paths, the Catalan numbers count a wide variety of mathematical objects. Stanley's \textit{Enumerative Combinatorics: Volume 2} (\cite{stanley1999enumerative}) lists 66 such interpretations, and many other interpretations can be found in the literature.

\begin{definition}[Dyck Path Area]
The area of a Dyck path $P\in D_n$, denoted $\area(P)$, is the total area between the path and the line $x=y$.
\end{definition}

\begin{example}
For $n=3$, we have the following five Dyck path areas

\begin{minipage}{0.19\textwidth}
\centering
\DrawShadedDyckPath{1}{3}{(0,0) -- (1,0) -- (1,1) -- (2,1) -- (2,2) -- (3,2) -- (3,3)}
Area = 1.5
\end{minipage}%
\begin{minipage}{0.19\textwidth}
\centering
\DrawShadedDyckPath{1}{3}{(0,0) -- (1,0) -- (1,1) -- (2,1) -- (3,1) -- (3,2) -- (3,3)}
Area = 2.5
\end{minipage}%
\begin{minipage}{0.19\textwidth}
\centering
\DrawShadedDyckPath{1}{3}{(0,0) -- (1,0) -- (2,0) -- (2,1) -- (2,2) -- (3,2) -- (3,3)}
Area = 2.5
\end{minipage}%
\begin{minipage}{0.19\textwidth}
\centering
\DrawShadedDyckPath{1}{3}{(0,0) -- (1,0) -- (2,0) -- (2,1) -- (3,1) -- (3,2) -- (3,3)}
Area = 3.5
\end{minipage}%
\begin{minipage}{0.19\textwidth}
\centering
\DrawShadedDyckPath{1}{3}{(0,0) -- (1,0) -- (2,0) -- (3,0) -- (3,1) -- (3,2) -- (3,3)}
Area = 4.5
\end{minipage}
\end{example}

\begin{remark}
In addition to our definition of the area, there is another popular definition (commonly used in algebraic combinatorics) that counts only the number of full squares between the Dyck path and the center line $x=y$. We can easily convert between these two definitions by noting that the only non-square areas are the triangles next to the center line, which have total area $\frac{n}{2}$. Subtracting this amount from the area given by our definition yields the area given by the other definition.
\end{remark}

\subsection{$(sn,n)$-Dyck Paths}

There are several ways to extend Dyck paths, but in this paper we will specifically focus on what we call $(sn,n)$-Dyck paths.

\begin{definition}[$(sn,n)$-Dyck Path]
An $(sn,n)$-Dyck path is a walk in the $(x,y)$ plane from $(0,0)$ to $(sn,n)$ with steps $(1,0)$ or $(0,1)$ that lies weakly beneath the line $x=sy$. We use $D^s_n$ to denote the set of $(sn,n)$-Dyck paths.
\end{definition}

\begin{example} For $s = 2, n = 2$, we have the following three $(sn,n)$-Dyck paths

\begin{center}
\begin{minipage}{0.25\textwidth}
\DrawDyckPath{2}{2}{(0,0) -- (1,0) -- (2,0) -- (2,1) -- (3,1) -- (4,1) -- (4,2)}
\end{minipage}
\begin{minipage}{0.25\textwidth}
\DrawDyckPath{2}{2}{(0,0) -- (1,0) -- (2,0) -- (3,0) -- (3,1) -- (4,1) -- (4,2)}
\end{minipage}
\begin{minipage}{0.25\textwidth}
\DrawDyckPath{2}{2}{(0,0) -- (1,0) -- (2,0) -- (3,0) -- (4,0) -- (4,1) -- (4,2)}
\end{minipage}
\end{center}
\end{example}

The number of $(sn,n)$-Dyck paths for a particular $n$ is given by the Fuss-Catalan numbers $F^s_n$, which are defined as
\begin{equation}\begin{split}
    F^s_n := \frac{1}{sn+1} {(s+1)n \choose n}
\end{split}\end{equation}

The definition of the area proceeds similarly to that for $(n,n)$-Dyck paths
\begin{definition}[$(sn,n)$-Dyck Path Area]
The area of an $(sn,n)$-Dyck path is the total area between the path and the line $x=sy$.
\end{definition}

\begin{example} For $s = 2, n = 2$, we have the following three $(sn,n)$-Dyck path areas

\begin{center}
\begin{minipage}{0.25\textwidth}
\centering
\DrawShadedDyckPath{2}{2}{(0,0) -- (1,0) -- (2,0) -- (2,1) -- (3,1) -- (4,1) -- (4,2)}
Area = 2
\end{minipage}
\begin{minipage}{0.25\textwidth}
\centering
\DrawShadedDyckPath{2}{2}{(0,0) -- (1,0) -- (2,0) -- (3,0) -- (3,1) -- (4,1) -- (4,2)}
Area = 3
\end{minipage}
\begin{minipage}{0.25\textwidth}
\centering
\DrawShadedDyckPath{2}{2}{(0,0) -- (1,0) -- (2,0) -- (3,0) -- (4,0) -- (4,1) -- (4,2)}
Area = 4
\end{minipage}
\end{center}
\end{example}

\subsection{Touchpoint Decomposition: $(n,n)$ Case}

To find a formula for the area of $(sn,n)$-Dyck paths, we will use the technique of first touchpoint decomposition, which has previously been used to find the area of $(n,n)$-Dyck paths \cite{merlini}. We will begin by exploring this technique's applications to the $(n,n)$ case.

The concept of a touchpoint decomposition relies on the observation that any Dyck path must eventually touch the diagonal at some point after the origin. We call these points touchpoints.

\begin{definition}[Touchpoints]
    A touchpoint of a Dyck path $P \in D_n$ is any point $Z \in P$, other than the origin, that lies on the center diagonal $x=y$.
\end{definition}

When looking at our possible touchpoints, we can observe that one of these touchpoints must be the first touchpoint (earliest in the path). In the case where there is only a touchpoint at the end, that is the first touchpoint. We can use this to split each Dyck path into two segments: the segment before the first touchpoint, and the segment after the first touchpoint.

\begin{example} For $n = 3$, we have the following five first touchpoint decompositions

\begin{minipage}{0.19\textwidth}
\begin{tikzpicture}
    \draw[help lines, color=gray!30] (0,0) grid (3,3);
    
    \draw[very thick, color=red] (0,0) -- (1,0) -- (1,1);
    \draw[very thick, color=green] (1,1) -- (2,1) -- (2,2) -- (3,2) -- (3,3);
    \draw[dashed, very thick, color=black] (0,0) -- (3,3);

    \fill[black] (1,1) circle (2.5pt);
\end{tikzpicture}
\end{minipage}%
\begin{minipage}{0.19\textwidth}
\begin{tikzpicture}
    \draw[help lines, color=gray!30] (0,0) grid (3,3);
    
    \draw[very thick, color=red] (0,0) -- (1,0) -- (1,1);
    \draw[very thick, color=green] (1,1) -- (3,1) -- (3,3);
    \draw[dashed, very thick, color=black] (0,0) -- (3,3);

    \fill[black] (1,1) circle (2.5pt);
\end{tikzpicture}
\end{minipage}%
\begin{minipage}{0.19\textwidth}
\begin{tikzpicture}
    \draw[help lines, color=gray!30] (0,0) grid (3,3);
    
    \draw[very thick, color=red] (0,0) -- (2,0) -- (2,2);
    \draw[very thick, color=green] (2,2) -- (3,2) -- (3,3);
    \draw[dashed, very thick, color=black] (0,0) -- (3,3);

    \fill[black] (2,2) circle (2.5pt);
\end{tikzpicture}
\end{minipage}%
\begin{minipage}{0.19\textwidth}
\begin{tikzpicture}
    \draw[help lines, color=gray!30] (0,0) grid (3,3);
    
    \draw[very thick, color=red] (0,0) -- (2,0) -- (2,1) -- (3,1) -- (3,3);
    \draw[dashed, very thick, color=black] (0,0) -- (3,3);

    \fill[black] (3,3) circle (2.5pt);
\end{tikzpicture}
\end{minipage}%
\begin{minipage}{0.19\textwidth}
\begin{tikzpicture}
    \draw[help lines, color=gray!30] (0,0) grid (3,3);
    
    \draw[very thick, color=red] (0,0) -- (3,0) -- (3,3);
    \draw[dashed, very thick, color=black] (0,0) -- (3,3);

    \fill[black] (3,3) circle (2.5pt);
\end{tikzpicture}
\end{minipage}
\end{example}

We will now apply this technique in Figure \ref{fig:first_touchpoint_decomposition}, which displays a first touchpoint decomposition for some arbitrary Dyck path $P \in D_{n+1}$.

\begin{figure}[H]
    \centering
    \begin{tikzpicture}[scale=1, every node/.style={scale=1}]
        \def\n{5}
        \def\k{2}
        
        \draw[gray, very thin] (0,0) grid (\n+2,\n+2);
        
        \draw[red, very thick] (0,0) -- (\n+2,\n+2) node[anchor=south west] [xshift=-1.5cm,yshift=-0.1cm] {$x = y$};
        
        \coordinate (O) at (0,0);
        \coordinate (B) at (1,0);
        \coordinate (Q) at (\k+1,\k);
        \coordinate (ExtendedQ) at (\n+2,\n+1); 
        \coordinate (P) at (\k+1,\k+1);
        \coordinate (A) at (\n+1,\n+1);
        \coordinate (ExtendedA) at (\n+2,\n+2); 
        
        \draw[very thick, -Stealth] (O) -- (B) node[midway,below] {$P_1$};
        \draw[very thick, dashed] (B) -- (ExtendedQ) node[anchor=south west] [xshift=+0.20cm,yshift=-0.25cm] {$x-y=1$};
        \draw[very thick, -Stealth] (Q) -- (P) node[midway,right] {$P_3$};
        
        \draw[very thick, blue, ->] (B) to[bend right=45] node[midway,below] {$P_2$} (Q);
        \draw[very thick, blue, ->] (P) to[bend right=50] node[midway,below] {$P_4$} (A);
        
        \fill[pattern=north west lines, pattern color=blue, opacity=0.5] (O) -- (B) -- (Q) -- (P) -- cycle;
        \fill[pattern=north east lines, pattern color=green, opacity=0.5] (B) to[bend right=45] (Q) -- (Q) -- (B);
        \fill[pattern=vertical lines, pattern color=orange, opacity=0.5] (P) to[bend right=50] (A) -- (A) -- (P);
        
        \foreach \point in {O, B, Q, P, A}
            \fill [black] (\point) circle (2.5pt);
        
        \node[below left] at (O) {$Z_1$};
        \node[below right] at (B) {$Z_2$};
        \node[below right] at (Q) {$Z_3$};
        \node[above left] at (P) {$Z_4$};
        \node[above left] at (A) {$Z_5$};
        
        \coordinate (L) at  (2.6,0.4);
        \coordinate (W) at (5,4);
        \node at (barycentric cs:O=1,B=1,Q=1,P=1) {$(i)$};
        \node at (barycentric cs:B=1,Q=1,L=1) {$(ii)$};
        \node at (barycentric cs:P=1,A=1,W=2) {$(iii)$};
        
        \node[below] at (\k+1,0) {$k+1$};
        \node[below] at (\n+1,0) {$n+1$};
        \node[left] at (0, \k+1) {$k+1$};
        \node[left] at (0, \k) {$k$};
        \node[left] at (0, \n+1) {$n+1$};
        \end{tikzpicture}
    \caption{A first touchpoint decomposition}
    \label{fig:first_touchpoint_decomposition}
\end{figure}

In Figure \ref{fig:first_touchpoint_decomposition}, the first touchpoint is $Z_4 = (k+1, k+1)$. We can then analyze the three areas labeled in the figure, and examine their contribution to $\area(P)$:
\begin{enumerate}[(i)]
    \item We know that no part of $P$ will cross into this region, since otherwise $Z_4$ would not be the first touchpoint. This means that we can explicitly calculate this region's contribution to $\area(P)$, which is $k + \frac{1}{2}$.
    \item Note that the segment $P_2$ goes from $Z_2 = (1,0)$ to $Z_3 = (k+1, k)$ and will not cross the line $x = y+1$. This means that $P_2$ is a $(k,k)$-Dyck path. More precisely, we can write its area as $\area(P_2)$, with $P_2 \in D_k$.
    \item Similarly to (ii), we can note that $P_4$ is an $(n-k,n-k)$-Dyck path, so we can write its area as $\area(P_4)$, with $P_4 \in D_{n-k}$. 
\end{enumerate}

All in all, this decomposition tells us that for any $P \in D_{n+1}$,
\begin{equation}\begin{split}
\area(P) = \lr{k + \frac{1}{2}} + \area(P_2) + \area(P_4)
\end{split}\end{equation}

This allows us to consider the area of a single $(n+1,n+1)$-Dyck path as the sum of the area of two smaller Dyck paths with shapes $(k,k)$ and $(n-k,n-k)$, plus some small factor, giving us the beginnings of a recurrence relation for the total area. A full example of how this can be used to find the total area for $(n,n)$-Dyck paths can be found in \cite{merlini}.

\section{The $(sn,n)$ Touchpoint Decomposition}

\subsection{Extending the Touchpoint Decomposition}

In the $(sn,n)$ case, we are unable to directly reuse the first touchpoint decomposition technique from the $(n,n)$ case. This is because the segments we decompose into might not be $(sn,n)$-Dyck paths.

\begin{example}
Consider the following $(sn,n)$-Dyck path with $s=2$ and $n=2$. 

\begin{center}
\begin{tikzpicture}
    \draw[help lines, color=gray!30] (0,0) grid (4,2);
    
    \draw[very thick, color=black] (0,0) -- (1,0) -- (2,0) -- (3,0) -- (3,1) -- (4,1) -- (4,2);
    \draw[dashed, very thick, color=black] (0,0) -- (4,2);
    \draw[dotted, very thick, color=black] (1,0) -- (4,1.5);
\end{tikzpicture}
\end{center}

We can note that there are no touchpoints on the center line (the dashed line). This guarantees that the area between the dashed line and the dotted line will be part of the area of the path. However, the rest is a Dyck path of shape $(3,1.5)$; while this is the same $2:1$ ratio as our original Dyck path of shape $(4,2)$, it is not one that we can work with.
\end{example}

In order to resolve this issue, we will use a technique from \cite{krattenthaler2015lattice}, which was used to find the total number of $(sn,n)$-Dyck paths. Instead of just using the center line as in the $ (n,n) $ case, we define $ s $ lines
\begin{equation}\begin{split}
  L_{i} := \{(x,y):x = sy+i\},\quad 0\leq i <s
\end{split}\end{equation}
and use these lines to perform a touchpoint decomposition. We will characterize our path by its \textit{last} touchpoint on each of these lines. Note that we could have done a last touchpoint decomposition in the $(n,n)$ case, with the exact same results. This would require treating $(0,0)$ as a touchpoint and $(n,n)$ as not a touchpoint, which is the opposite of what we did.

To perform this decomposition, fix a path $ P\in D_{n+1}^{s} $ and consider the following procedure. The path $ P $ will meet the line $ L_{0} $ somewhere for the last time (excluding $ (s(n+1),n+1) $). Denote this point $ Z_{0} = (sy_{0},y_{0}) $. Let $ P_{0} $ be the subpath of $ P $ from $ (0,0) $ to $ Z_{0} $. The next point in $ P $ is forced to be $ Z_{0} +(1,0) \in L_{1}$. Then, for $ 1\leq k <s $, 
\begin{itemize}
  \item $ P $ meets $ L_{k} $ for the last time at $ Z_{k} = (sy_{k}+k, y_{k}) $,
  \item $ P_{k} $ is the subpath of $ P $ from $ Z_{k-1}+(1,0)\in L_{k} $ to $ Z_{k} $,
  \item the next point in $ P $ after $ P_{k} $ is forced to be $ Z_{k}+(1,0)\in L_{k+1} $.
\end{itemize}
Finally, let $ P_{s} $ be the subpath of $ P $ from $ Z_{s-1} +(1,0)$ to $ Z_{s}:=(s(n+1), n) $ and note that the last move in the path is to $ (s(n+1), n+1) $. This modified decomposition is shown in Figure \ref{fig:extended_touchpoint_decomposition}.

\begin{figure}[H]
\centering
\begin{tikzpicture}[scale=0.50]
  \def\n{12}
  \def\s{1.5}
  \def\TickSize{3.5pt}
  \draw[gray, very thin] (0,0) -- (0,\n+1) -- (\s*\n+\s,\n+1) --(\s*\n+\s, 0) -- (0,0);
  
  \draw[red, thick] (0,0) -- (\s*\n+\s,\n+1) node[anchor=south west] [xshift=-1.5cm,yshift=-0.1cm] {$x = s y$};
  
\coordinate (O) at (0,0);

  \coordinate (A) at (\s*2,2); 
  \coordinate (B) at (\s*3+1,3); 
  \coordinate (C) at (\s*5+2,5); 

  \coordinate (X) at (\s*2+1,2);
  \coordinate (Y) at (\s*3+2,3);

  \node[above left] at (A) {$Z_{0}$};
  \node[above] at (B) {$Z_{1}$};
  \node[above] at (C) {$Z_{2}$};
  \node[left] at (0,2) {$ y_{0} $};
  \node[left] at (0,3) {$ y_{1} $};
  \node[left] at (0,5) {$ y_{2} $};
  \node[left] at (0,\n-3) {$ y_{s-1} $};
  \node[left] at (0,\n-1) {$ n $};
  
  \foreach \y in {2,3,5, \n-3, \n-1, \n} {%
      \draw ($(0,\y) + (-\TickSize,0)$) -- ($(0,\y) + (\TickSize,0)$)
          node [right] {};
  }
  \foreach \point in {A, B, C}
      \fill [black] (\point) circle (3.25pt);

  \tikzset{decoration={snake,amplitude=.4mm,segment length=2mm,
                       post length=0mm,pre length=0mm}}
  \draw[thick, blue, decorate] (O) to[bend right=30] node[midway,below right, black] {$ P_{0} $} (A);
  \draw[thick, blue, decorate] (X) to[bend right=40] node[midway,below right, black] {$ P_{1} $} (B);
  \draw[thick, blue, decorate] (Y) to[bend right=30] node[midway,below right, black] {$ P_{2} $} (C);
  \path (\s*5+3,6) -- node[auto=false]{\ldots} (\s*5+5,6);
  \draw [very thick, -Stealth] (A) -- (X);
  \draw [very thick,dashed] (X) -- (B);
  \draw [very thick, -Stealth] (B) -- (Y);
  \draw [very thick,dashed] (Y)-- (C);
  \coordinate (J) at (\s*\n+\s, \n+1);
  \coordinate (K) at (\s*\n+\s, \n-1);
  \coordinate (L) at (\s*\n-\s, \n-3);
  \coordinate (M) at (\s*\n-\s-1, \n-3);
  \coordinate (N) at (\s*\n -\s-1 -1.5*\s, \n-4.5);
  \foreach \point in {J, K,  M}
      \fill [black] (\point) circle (2.5pt);

  \node[above ] at (M) {$ Z_{s-1} $};

  \draw [very thick, dashed] (L) -- (K);
  \draw [very thick, -Stealth] (K) -- (J);
  \draw [very thick, -Stealth] (M) -- (L);
  \draw [very thick, dashed] (N) -- (M);

  \tikzset{decoration={snake,amplitude=.4mm,segment length=2mm,
                       post length=0mm,pre length=0mm}}
  \draw[thick, blue, decorate] (\s*\n -\s-2 -1.5*\s+1, \n-5) to[bend right=30] node[midway,below] {} (M);
  \draw[thick, blue, decorate] (L) to[bend right=40] node[midway,below] {} (K);

\end{tikzpicture}
\caption{Last touchpoint decomposition with $s$ lines}
\label{fig:extended_touchpoint_decomposition}
\end{figure}

\begin{definition}[Path Type]
  The vector $ \vec{y} = (y_{0},\ldots, y_{s-1}) $ is called the \emph{type} of the path $ P $.
\end{definition}

Given a type $\vec{y} = (y_{0}, \ldots, y_{s-1})$, with $0 \leq y_{0} \leq \cdots \leq y_{s-1} \leq n$ and the convention $y_{s} = n$, we define its \emph{distances} $d_{0} = y_{0}$ and $d_{l} = y_{l} - y_{l-1}$ for $1 \leq l \leq s$. Note that $d_{0} + \cdots + d_{s} = n$, and that a type is uniquely determined by its distances. The value of this decomposition comes from the following counting lemma, which is implicit in the second proof of Theorem 10.4.5 in \cite{krattenthaler2015lattice}.

\begin{lemma}[Paths of a Given Type]\label{lem:type_count}
Let $\vec{y}$ be a type with distances $d_{0}, \ldots, d_{s}$. Then the map $P \mapsto (P_{0}, \ldots, P_{s})$ is a bijection between the set of paths $P \in D^s_{n+1}$ of type $\vec{y}$ and the product $D^s_{d_0} \times \cdots \times D^s_{d_s}$, where each subpath $P_{l}$ is identified with an $(sd_l, d_l)$-Dyck path by translation. In particular, the number of paths of type $\vec{y}$ is
\begin{equation}\begin{split}
    F^s_{d_0} F^s_{d_1} \cdots F^s_{d_s}
\end{split}\end{equation}
\end{lemma}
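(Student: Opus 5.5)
The plan is to verify three things in turn: each subpath $P_l$, once translated, is an $(sd_l,d_l)$-Dyck path; the map $P \mapsto (P_0,\ldots,P_s)$ is injective; and it is surjective onto $D^s_{d_0}\times\cdots\times D^s_{d_s}$. The counting statement then follows at once from $|D^s_m| = F^s_m$.

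The key tool is the quantity $x - sy$ along the path. A horizontal step increases it by $1$, and a vertical step decreases it by $s$. A point of $P$ lies weakly below $x = sy$ exactly when $x - sy \ge 0$, and it lies on $L_i$ exactly when $x - sy = i$.

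For $1 \le l \le s-1$, the subpath $P_l$ runs from $Z_{l-1}+(1,0) = (sy_{l-1}+l,\,y_{l-1})$ to $Z_l = (sy_l+l,\,y_l)$. Translating by $-(sy_{l-1}+l,\,y_{l-1})$ turns it into a path from $(0,0)$ to $(sd_l,d_l)$, and the translation sends $L_l$ to $x = sy$.

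I will argue that every point of $P_l$ satisfies $x - sy \ge l$. Suppose some point had $x - sy < l$. After $Z_{l-1}$, the value of $x-sy$ can never again equal $l-1$, since $Z_{l-1}$ was the last touchpoint on $L_{l-1}$. Because $x - sy$ increases by exactly $1$ per horizontal step, it cannot jump over $l-1$ upward. So once it dropped below $l$ without hitting $l-1$, it would be stuck below $l-1$. But the endpoint has $x - sy = s \cdot 0 + \ldots$; more precisely, $Z_l$ on $L_l$ must still be reached, a contradiction. The cases $P_0$ (with $Z_{-1}+(1,0)$ replaced by the origin) and $P_s$ (ending at $(s(n+1),n)$, where $x-sy = s$) are handled identically.

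Injectivity is clear, because $P$ is the concatenation of $P_0, E, P_1, E, \ldots, E, P_s, N$, where $E$ is an east step and $N$ is a north step, and the type $\vec y$ fixes where each translated piece sits.

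For surjectivity, start from any tuple of Dyck paths, translate the pieces into position, and concatenate them with the forced east steps and the final north step. The result is a path from $(0,0)$ to $(s(n+1),n+1)$ with $x - sy \ge 0$ throughout, so it lies in $D^s_{n+1}$.

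The main obstacle is showing that this constructed path really has type $\vec y$, meaning $Z_l$ is the last touchpoint on $L_l$. After $Z_l$ the path takes an east step to level $l+1$. From then on, every later piece $P_m$ stays at $x - sy \ge m \ge l+1$, and the final north step from $x-sy = s$ lands at $0 < l$. Hence $x - sy = l$ never recurs, and for $l = 0$ the endpoint is excluded by the convention. This step relies on the monotone "levels" structure: the argument depends on the claim that, along a path with $+1$ and $-s$ increments, the last visit to each level is well ordered.
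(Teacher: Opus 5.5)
Your proof is correct and follows essentially the same route as the paper. Each translated $P_l$ stays weakly beneath $L_l$ because it never revisits $L_{l-1}$ and $x-sy$ can only increase in unit steps, and the inverse map concatenates the pieces with the forced east steps and the final north step. Your version is more careful than the paper's in two ways: it restricts the ``strictly beneath $L_{l-1}$'' claim to the points of $P_l$, and it explicitly checks that the concatenated path has type $\vec{y}$, which the paper only asserts. You should delete the garbled phrase ``$x-sy = s\cdot 0+\ldots$'' and keep your sentence that $Z_l$ must still be reached.
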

\begin{proof}
The subpath $P_{0}$ runs from $(0,0)$ to $(sy_{0}, y_{0})$ while staying weakly beneath $L_{0}$, so it is itself an $(sd_0, d_0)$-Dyck path. For $1 \leq l \leq s$, after its last touchpoint on $L_{l-1}$ the path lies strictly beneath $L_{l-1}$, and hence (since it moves between lattice points) weakly beneath $L_{l}$. Consequently, $P_{l}$ runs from $Z_{l-1} + (1,0)$ to $Z_{l}$, both points of $L_{l}$, while staying weakly beneath $L_{l}$; translating its starting point to the origin identifies $P_{l}$ with an $(sd_l, d_l)$-Dyck path.

Conversely, concatenating any tuple of such paths, with the forced step $(1,0)$ inserted between consecutive subpaths and the final step $(0,1)$ appended, produces a path in $D^s_{n+1}$ of type $\vec{y}$, and the two constructions are mutually inverse. Since the subpaths may be chosen independently, the product formula follows.
\end{proof}

Our derivations will be phrased in terms of ordinary generating functions. We write
\begin{equation}\begin{split}
    F(z) := \sum_{n=0}^{\infty} F^s_n z^n
\end{split}\end{equation}
for the generating function of the Fuss-Catalan numbers; the parameter $s$ is fixed throughout our arguments, so we suppress it from this notation. For a power series $H(z)$, we write $[H(z)]_n$ for the coefficient of $z^n$ in $H(z)$, so that, for example, $F^s_n = [F(z)]_n$ and $n F^s_n = [zF'(z)]_n$.

Summing the product formula over all types recovers the enumeration $F^s_{n+1} = [F(z)^{s+1}]_{n}$ from \cite{krattenthaler2015lattice}. Both this identity and Lemma \ref{lem:type_count} will be used repeatedly in the computation below.

\subsection{A Recursive Formula for the Total Area}
We are interested in studying the total area of all $ (sn,n) $-Dyck paths, i.e., the quantity
\begin{equation}\begin{split}
  A^s_n = \sum_{P\in D_{n}^{s}}\area(P)
\end{split}\end{equation}

We write $A(z) := \sum_{n=0}^{\infty} A^s_n z^n$ for the generating function of the total areas.

In this section, we derive a recursive formula for $ A^s_{n+1} $ using the touchpoint decomposition above.
Note that the data for the decomposition of a path is entirely encoded in its type vector $ \vec{y} $. Hence, in order to iterate over all possible paths, we will first iterate over all possible paths of a given type and then iterate over all possible types $ 0\leq y_{0}\leq y_{1}\leq \cdots \leq y_{s-1} \leq n $, which we will find convenient to index by their distance vectors. For simplicity, set $ y_{s}=n $.

To begin, fix a type $ \vec{y} = (y_{0},\ldots, y_{s-1}) $ and a path $ P\in D_{n+1}^{s}$ with type $\vec{y}$. The area of $ P $ decomposes into the area contributions from the portions corresponding to the subpaths $ P_{l} $ for $ 0\leq l \leq s $. The contribution of $ P_{0} $ to the total area is precisely $ \area(P_{0}) $. \\

\textbf{\underline{Contribution of $0< l < s$}}: The parallelogram $ I_{l} $ in Figure \ref{fig:main_contributions} contributes $ l(y_{l}-y_{l-1}) $, while $ P_{l} $ contributes $ \area(P_{l}) $. \\

\begin{figure}[H]
    \centering
    \begin{tikzpicture}[scale=0.80, every node/.style={scale=1}]
      \def\n{4}
      \def\s{2}
      \def\TickSize{3pt}
      \draw[gray] (-1,0) -- (-1,\n+1);
      
      \draw[red, thick] (0,0) -- (\s*\n+1*\s,\n+1) node[anchor=south west] [xshift=-1.5cm,yshift=-0.1cm] {$x = s y$};
      
      \coordinate (O) at (0,0);
    
      \coordinate (X) at (\s*2+2,2);
      \coordinate (Y) at (\s*3+4,3);

      \coordinate (J) at (\s*\n+\s, \n+1);
      \coordinate (K) at (\s*\n+\s, \n-1);
      \coordinate (b) at (\s*\n+\s-4, \n-1);
      \coordinate (L) at (\s*\n-\s, \n-3);
      \coordinate (M) at (\s*\n-\s-2, \n-3);
      \coordinate (a) at (\s*\n-\s-4, \n-3);
      \coordinate (N) at (\s*\n -\s-2 -1.5*\s, \n-4.5);
      \foreach \point in { K,  M, a, b}
          \fill [black] (\point) circle (2.5pt);
    
      \definecolor{darkgreen}{RGB}{0,100,0}
      \coordinate (I) at (\s*\n+\s-4,\n-2);
      \node at (I) [darkgreen] {$ I_{l} $};
    
      \coordinate (P) at (\s*\n+\s-1, \n-2.5);
      \node[below right] at (P) [blue] {$ P_{l} $};
    
      \node[below] at (M) {$ Z_{l-1} $};
      \node[above] at (K) {$ Z_{l}=(sy_{l}+l,y_{l}) $};
      \node[above left] at (b) {$ (sy_{l},y_{l}) $};
      \node[above left] at (a) {$ (sy_{l-1},y_{l-1}) $};
      
      \draw [very thick, dashed] (L) -- (K);
      \draw [ dashed] (a) -- (M);
      \draw [ dashed] (b) -- (K);
      \draw [very thick, -Stealth] (M) -- (L);
      
      \foreach \y in {\n-3,\n-1} {%
          \draw ($(-1,\y) + (-\TickSize,0)$) -- ($(-1,\y) + (\TickSize,0)$)
              node [right] {};
      }
      \node[left] at (-1,\n-3) {$ y_{l-1} $};
      \node[left] at (-1,\n-1) {$ y_{l} $};
      \tikzset{decoration={snake,amplitude=.4mm,segment length=2mm,
                           post length=0mm,pre length=0mm}}
      \draw[thick, blue, decorate] (L) to[bend right=40] node[midway,below] {} (K);

    \end{tikzpicture}
    \caption{Contribution of $0 < l < s$}
    \label{fig:main_contributions}
\end{figure}

\textbf{\underline{Contribution of $l=s$}}: Again, the parallelogram $ I_{s} $ in Figure \ref{fig:last_contribution} contributes $ s(y_{s}-y_{s-1}) = s(n-y_{s-1}) $ and $ P_{s} $ contributes $ \area(P_{s}) $. Additionally, the triangle $ T $ contributes an area of $ \frac{s}{2} $. \\

\begin{figure}[H]
    \centering
    \begin{tikzpicture}[scale=0.80, every node/.style={scale=1}]
      \def\n{4}
      \def\s{2}
      \def\TickSize{3pt}
    
      \draw[red, thick] (0,0) -- (\s*\n+\s,\n+1) node[anchor=south west] [xshift=-1.5cm,yshift=-0.1cm] {$x = s y$};
      
      \coordinate (O) at (0,0);
    
      \coordinate (X) at (\s*2+2,2);
      \coordinate (Y) at (\s*3+4,3);

      \coordinate (J) at (\s*\n+\s, \n+1);
      \coordinate (K) at (\s*\n+\s, \n-1);
      \coordinate (b) at (\s*\n+\s-4, \n-1);
      \coordinate (L) at (\s*\n-\s, \n-3);
      \coordinate (M) at (\s*\n-\s-2, \n-3);
      \coordinate (a) at (\s*\n-\s-4, \n-3);
      \coordinate (N) at (\s*\n -\s-2 -1.5*\s, \n-4.5);
      \foreach \point in { K,  M, a, b}
          \fill [black] (\point) circle (2.5pt);
    
      \definecolor{darkgreen}{RGB}{0,100,0}
      \coordinate (I) at (\s*\n+\s-4,\n-2);
      \node at (I) [darkgreen] {$ I_{s} $};
    
      \coordinate (P) at (\s*\n+\s-1, \n-2.5);
      \node[below right] at (P) [blue] {$ P_{s} $};
      
      \coordinate (T) at (\s*\n+\s-1, \n);
      \node[below left] at (T) [darkgreen] {$ T $};
    
      \node[below] at (M) {$ Z_{s-1} $};
      \node[right] at (K) {$ Z_{s} $};
      \node[above left] at (b) {$ (sy_{s},y_{s}) $};
      \node[above left] at (a) {$ (sy_{s-1},y_{s-1}) $};
      
      \draw [very thick, dashed] (L) -- (K);
      \draw [ dashed] (a) -- (M);
      \draw [ dashed] (b) -- (K);
      \draw [very thick, -Stealth] (K) -- (J);
      \draw [very thick, -Stealth] (M) -- (L);
      
      \tikzset{decoration={snake,amplitude=.4mm,segment length=2mm,
                           post length=0mm,pre length=0mm}}
      \draw[thick, blue, decorate] (L) to[bend right=40] node[midway,below] {} (K);

    \end{tikzpicture}
    \caption{Contribution of $l = s$}
    \label{fig:last_contribution}
\end{figure}

Combining these contributions, and rewriting in terms of the distances $d_{l}$ of the type of $P$, gives
\begin{equation}\begin{split}
  \area(P) = \frac{s}{2} + \sum_{l=1}^{s} l \cdot d_{l} + \sum_{l=0}^{s}\area(P_{l})
\end{split}\end{equation}

We can then split the total area by type. Since a type is uniquely determined by its distances, we may specify each type by its distance vector $\vec{d} = (d_{0}, \ldots, d_{s})$, so that summing over all types is equivalent to summing over $d_{0} + \cdots + d_{s} = n$ with $d_{0}, \ldots, d_{s} \geq 0$. This gives
\begin{equation}\begin{split}
  A^s_{n+1} = \sum_{P\in D_{n+1}^{s}} \area(P) = \sum_{d_{0}+\cdots+d_{s}=n} \sum_{\substack{P\in D_{n+1}^{s}\\\text{of type }\vec{d}}} \area(P) = \\
  \sum_{d_{0}+\cdots+d_{s}=n} \sum_{\substack{P\in D_{n+1}^{s}\\\text{of type }\vec{d}}} \lr{\frac{s}{2} + \sum_{l=1}^{s} l \cdot d_{l} + \sum_{l=0}^{s}\area(P_{l})}
\end{split}\end{equation}

This gives us three sums to evaluate, one for each of the three terms in our $\area(P)$ formula. Since $ \frac{s}{2} $ is constant, its corresponding sum evaluates to
\begin{equation}\begin{split}
  \sum_{P\in D_{n+1}^{s}}\frac{s}{2} = \frac{s}{2}F^s_{n+1} = \frac{s}{2}[F(z)^{s+1}]_{n} 
\end{split}\end{equation}

The remaining two sums both distribute a weight over a single coordinate of the distance vector, and we will evaluate both with the following identity.

\begin{lemma}[Weighted Type Sums]\label{lem:weighted_type_sum}
Let $(c_d)_{d \geq 0}$ be a sequence with generating function $C(z) = \sum_{d=0}^{\infty} c_d z^d$. Then, for each $0 \leq l \leq s$,
\begin{equation}\begin{split}
    \sum_{d_{0}+\cdots+d_{s}=n} \lr{\prod_{\substack{0 \leq i \leq s\\i \neq l}} F^s_{d_i}} c_{d_l} = [C(z) F(z)^{s}]_{n}
\end{split}\end{equation}
\end{lemma}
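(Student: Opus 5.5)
The identity is the Cauchy product rule for power series applied to a product of $s+1$ factors. The plan is to expand the right-hand side as a product and match coefficients of $z^n$.

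First, write $F(z)^{s} = \prod_{i \neq l} F(z)$, assigning one copy of $F(z) = \sum_{d_i \geq 0} F^s_{d_i} z^{d_i}$ to each index $i \in \{0,\ldots,s\} \setminus \{l\}$. Then write $C(z) = \sum_{d_l \geq 0} c_{d_l} z^{d_l}$ for the remaining index $l$. Multiplying these $s+1$ series formally gives
\begin{equation*}
    C(z)F(z)^{s} = \sum_{d_0,\ldots,d_s \geq 0} \lr{\prod_{i \neq l} F^s_{d_i}} c_{d_l}\, z^{d_0+\cdots+d_s}.
\end{equation*}
This is just the generalized Cauchy product. It holds as an identity of formal power series: each monomial $z^m$ receives contributions from only the finitely many tuples with $d_0+\cdots+d_s=m$, so no convergence issue arises.

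Second, extract $[\cdot]_n$ on both sides. Only the tuples with $d_0+\cdots+d_s = n$ contribute, which gives exactly the left-hand side. Since the factors commute, it does not matter which position $l$ carries $C$; this is why the statement holds uniformly for every $0 \le l \le s$.

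There is no real obstacle. The only point needing care is the bookkeeping: the ordered tuples $(d_0,\ldots,d_s)$ of nonnegative integers summing to $n$ correspond bijectively to the choices of one term from each of the $s+1$ factors with total degree $n$. If desired, this can be made rigorous by induction on the number of factors using the two-factor Cauchy product $[G(z)H(z)]_n = \sum_{a+b=n} [G]_a [H]_b$.
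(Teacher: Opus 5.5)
Your proposal is correct and is essentially the paper's argument: both simply recognize the sum as the coefficient of $z^n$ in a Cauchy product. The paper first relabels so that $l = s$ and then writes the sum as a two-factor convolution of $c_{d_s}$ with $[F(z)^{s}]_{n-d_s}$, whereas you expand the $(s+1)$-fold product in one step; this difference is purely cosmetic.
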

\begin{proof}
Swapping the coordinates $d_l$ and $d_s$ permutes the terms of the sum, so we may assume $l = s$. The sum then factors as the convolution
\begin{equation}\begin{split}
    \sum_{0 \leq d_s \leq n} c_{d_s} \sum_{d_{0}+\cdots+d_{s-1}=n-d_s} \prod_{0 \leq i \leq s-1} F^s_{d_i} = \sum_{0 \leq d_s \leq n} c_{d_s} \cdot [F(z)^{s}]_{n-d_s} = [C(z) F(z)^{s}]_{n}
\end{split}\end{equation}
\end{proof}

Addressing the sum from the parallelogram contributions, we move the sum over $l$ outward, apply Lemma \ref{lem:type_count} to count the paths of each type, and then apply Lemma \ref{lem:weighted_type_sum} with $c_d = d F^s_d$, so that $C(z) = zF'(z)$:
\begin{equation}\begin{split}
  \sum_{d_{0}+\cdots+d_{s}=n} \sum_{\substack{P\in D_{n+1}^{s}\\\text{of type }\vec{d}}} \sum_{l=1}^{s} l \cdot d_{l} = \sum_{l=1}^{s}l\sum_{d_{0}+\cdots+d_{s}=n} \lr{\prod_{\substack{0 \leq i \leq s\\i \neq l}} F^s_{d_i}} d_l F^s_{d_l} = \\
  \lr{\sum_{l=1}^{s}l} [zF'(z)F(z)^{s}]_{n} = \frac{s(s+1)}{2}[zF^{\prime}(z)F(z)^{s}]_{n}
\end{split}\end{equation}

For the recursive area sum, we first condition on the subpath $P_l = \gamma$ (here Lemma \ref{lem:type_count} counts the paths of type $\vec{d}$ with a prescribed subpath $P_{l} = \gamma$), then sum the areas over $\gamma \in D_{d_l}^{s}$ and apply Lemma \ref{lem:weighted_type_sum} with $c_d = A^s_d$:
\begin{equation}\begin{split}
    \sum_{l=0}^{s}\sum_{d_{0}+\cdots+d_{s}=n} \sum_{\substack{P\in D_{n+1}^{s}\\\text{of type }\vec{d}}} \area(P_{l}) = \sum_{l=0}^{s}\sum_{d_{0}+\cdots+d_{s}=n}\sum_{\gamma\in D_{d_l}^{s}} \area(\gamma) \prod_{\substack{0\leq i \leq s\\i\neq l}} F^s_{d_i} = \\
    \sum_{l=0}^{s}\sum_{d_{0}+\cdots+d_{s}=n} \lr{\prod_{\substack{0\leq i \leq s\\i\neq l}} F^s_{d_i}} A^s_{d_l} = \sum_{l=0}^{s} [A(z)F(z)^{s}]_{n} = (s+1)[A(z)F(z)^{s}]_{n}
\end{split}\end{equation}

Combining these sums establishes our desired recursive total area formula.
\begin{proposition}[Recursive Area Formula]\label{prop:recursive}
For all $n \geq 0$ and $s \geq 1$,
\begin{equation}\label{eq:recursive}
  A^s_{n+1} =\frac{s}{2}[F(z)^{s+1}]_{n} + \frac{s(s+1)}{2}[zF^{\prime}(z)F(z)^{s}]_{n}+ (s+1)[A(z)F(z)^{s}]_{n}
\end{equation}
\end{proposition}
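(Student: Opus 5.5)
The plan is to assemble the proposition from the three sums just computed; the only real work is checking that the decomposition of $\area(P)$ is valid and that it holds for every $n\ge 0$ and $s\ge 1$.

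First, I would confirm the pointwise identity
\[
\area(P)=\frac{s}{2}+\sum_{l=1}^{s} l\, d_l+\sum_{l=0}^{s}\area(P_l)
\]
for each $P\in D^s_{n+1}$ of type $\vec d$. The region between $P$ and $L_0$ is partitioned by the subpaths, the horizontal forced steps, the parallelograms $I_l$ and the final triangle $T$. For $1\le l\le s$, the strip between $L_0$ and $L_l$ over heights $y_{l-1}\le y\le y_l$ is the parallelogram $I_l$. It has horizontal width $l$ and height $d_l$, so its area is $l\,d_l$. The region between $P_l$ and $L_l$ is $\area(P_l)$, because translating $L_l$ to $L_0$ identifies $P_l$ with an $(sd_l,d_l)$-Dyck path, as in Lemma \ref{lem:type_count}. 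The forced horizontal steps $Z_{l-1}\to Z_{l-1}+(1,0)$ enclose no extra area: they run from $L_{l-1}$ to $L_l$ at height $y_{l-1}$, which is exactly the bottom edge of $I_l$. Finally, the last vertical step from $(s(n+1),n)$ to $(s(n+1),n+1)$ together with $L_0$ bounds a triangle with legs $s$ and $1$, so its area is $s/2$. Degenerate cases such as $d_l=0$ need a brief check: then $I_l$ and $P_l$ are empty and the formula still holds.

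Second, I would sum over all $P\in D^s_{n+1}$, grouping the paths by distance vector $\vec d$ with $d_0+\cdots+d_s=n$. This is legitimate because Lemma \ref{lem:type_count} shows every path has a unique type, and types correspond bijectively to distance vectors. The sum then splits into three parts.

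The constant term gives $\frac{s}{2}F^s_{n+1}=\frac{s}{2}[F(z)^{s+1}]_n$. This uses the enumeration obtained by summing Lemma \ref{lem:type_count} over types, which is itself the $c_d=F^s_d$ case of Lemma \ref{lem:weighted_type_sum}.

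For the parallelogram term, the number of paths of type $\vec d$ is $\prod_i F^s_{d_i}$. Exchanging the sums over $l$ and $\vec d$ and applying Lemma \ref{lem:weighted_type_sum} with $c_d=dF^s_d$, so that $C(z)=zF'(z)$, gives $\sum_{l=1}^s l\,[zF'(z)F(z)^s]_n=\frac{s(s+1)}{2}[zF'(z)F(z)^s]_n$.

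For the recursive term, the bijection in Lemma \ref{lem:type_count} shows that fixing $P_l=\gamma$ leaves exactly $\prod_{i\ne l}F^s_{d_i}$ paths. Summing $\area(\gamma)$ over $\gamma\in D^s_{d_l}$ therefore produces $A^s_{d_l}\prod_{i\ne l}F^s_{d_i}$. Lemma \ref{lem:weighted_type_sum} with $c_d=A^s_d$ then gives $(s+1)[A(z)F(z)^s]_n$.

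Adding the three parts yields \eqref{eq:recursive}.

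The main obstacle I expect is the geometric step: making sure the area regions really tile the region between $P$ and $L_0$ without overlap or gaps. Two points need care there. One is that each $P_l$ lies weakly below $L_l$, which was shown in Lemma \ref{lem:type_count}, so the regions do not overlap. The other is the boundary cases $d_0=0$ and $n=0$, where some subpaths are single points. For $n=0$ I would verify the formula directly: the only path is $s$ east steps followed by one north step, its area is $s/2$, and the right-hand side is $\frac{s}{2}\cdot 1+0+0$, using $A^s_0=0$.
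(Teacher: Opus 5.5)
Your proposal is correct and follows the paper's own argument essentially step for step. It uses the same pointwise decomposition $\area(P)=\frac{s}{2}+\sum_{l=1}^{s} l\,d_l+\sum_{l=0}^{s}\area(P_l)$ (parallelograms $I_l$, subpaths $P_l$, triangle $T$), and then the same three type sums evaluated via Lemma~\ref{lem:type_count} and Lemma~\ref{lem:weighted_type_sum} with $c_d=F^s_d$, $dF^s_d$, and $A^s_d$. Your extra checks of the tiling, the degenerate cases $d_l=0$, and the base case $n=0$ are consistent with the paper's treatment and only make it more explicit.
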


\section{Total Area Formula}\label{sec:total_area_formula}

\subsection{The Area Generating Function}

Using the recursive total area formula in Equation \ref{eq:recursive}, we can simplify by expressing the formula as a functional equation and then applying various transformations. Multiplying Equation \ref{eq:recursive} by $z^n$ and summing over $n \geq 0$ (noting that $A^s_0 = 0$) gives
\begin{equation}\begin{split}
    \frac{A(z)}{z} =  \frac{s}{2}F(z)^{s+1} + \frac{s(s+1)}{2}zF^{\prime}(z)F(z)^{s} + (s+1)A(z)F(z)^{s}
\end{split}\end{equation}

Collecting the $A(z)$ terms on the left, factoring $\frac{s}{2} z F(z)^{s+1}$ out of the right-hand side, and solving for $A(z)$ yields
\begin{equation}\begin{split}\label{eq:area_isolation}
    A(z) = \frac{\frac{s}{2} z F(z)^{s+1} \lr{1 + (s+1) z \frac{F'(z)}{F(z)}}}{1-(s+1)zF(z)^{s}}
\end{split}\end{equation}

To continue simplifying, we will rely on a useful identity for $F(z)$, referenced in \cite{mlotkowski2012densitiesraneydistributions} (Equation 6). This identity's derivative, which we have placed just below, will also prove useful.
\begin{equation}\begin{gathered}\label{eq:fz_identities}
    F(z) = 1 + z F(z)^{s+1} \\
    F'(z) = F(z)^{s+1} + (s+1) z F(z)^{s} F'(z)
\end{gathered}\end{equation}

Two immediate consequences of these identities are
\begin{align}\label{eq:fz_consequences}
    \frac{1}{F(z)} = 1 - z F(z)^{s} && \text{and} && 1 - (s+1) z F(z)^{s} = \frac{F(z)^{s+1}}{F'(z)}
\end{align}
each obtained by rearranging the corresponding identity in Equation \ref{eq:fz_identities}.

Applying the identity $1 - (s+1) z F(z)^{s} = F(z)^{s+1} / F'(z)$ to Equation \ref{eq:area_isolation} and simplifying yields
\begin{equation}\begin{split}\label{eq:functional_equation}
    A(z) = \frac{\frac{s}{2} z F(z)^{s+1} \lr{1 + (s+1) z \frac{F'(z)}{F(z)}}}{\frac{F(z)^{s+1}}{F'(z)}} = \frac{s}{2} zF'(z) \lr{1 + (s+1) \frac{zF'(z)}{F(z)}}
\end{split}\end{equation}

\begin{remark}
If we directly plug in the formulas for $zF'(z)$ and $\frac{1}{F(z)}$ into Equation \ref{eq:functional_equation} and simplify, we end up with a lengthy triple-sum formula for $A^s_n$, closely resembling one derived by Mansour and Ramírez for a related problem (\cite{mansour2021enumerations}, Theorem 3.3). Fortunately, the transformations below will allow us to do substantially better.
\end{remark}

Substituting $\frac{1}{F(z)} = 1 - z F(z)^{s}$ into the remaining factor of Equation \ref{eq:functional_equation} gives
\begin{equation}\begin{split}
    (s+1) \frac{zF'(z)}{F(z)} = (s+1) zF'(z) \lr{1 - z F(z)^{s}} = \\
    (s+1) zF'(z) - (s+1) z^2 F'(z) F(z)^{s} = \\
    szF'(z) + z \lr{F'(z) - (s+1) z F(z)^{s} F'(z)} = \\
    szF'(z) + z F(z)^{s+1} = \\
    szF'(z) + F(z) - 1
\end{split}\end{equation}
where the final two equalities apply the derivative form and then the original identity from Equation \ref{eq:fz_identities}. Hence $1 + (s+1) \frac{zF'(z)}{F(z)} = F(z) + szF'(z)$, and Equation \ref{eq:functional_equation} simplifies to
\begin{equation}\begin{split}\label{eq:functional_transformed1}
    A(z) = \frac{1}{2} szF'(z) \cdot \lr{F(z) + szF'(z)}
\end{split}\end{equation}

Compared to Equation \ref{eq:functional_equation}, this formula will be much more pleasant to work with.

\begin{remark}
Banderier and Gittenberger use the kernel method to find generating functions for the total area below directed lattice paths (\cite{banderier2006area}, Theorem 5), and Section 6 of their paper shows how paths lying below a line of rational slope, such as $(sn,n)$-Dyck paths, can be recast as directed lattice paths. Combining these two results gives an alternative derivation of Equation \ref{eq:functional_transformed1}.
\end{remark}

In fact, the formula simplifies further. Let $G(z) := \sum_{n=0}^{\infty} {(s+1)n \choose n} z^n$. Since $(sn+1) F^s_n = {(s+1)n \choose n}$, we have
\begin{equation}\begin{split}\label{eq:functional_simplification}
    F(z) + szF'(z) = \sum_{n=0}^{\infty} (1 + sn) F^s_n z^n = G(z)
\end{split}\end{equation}
and hence also $szF'(z) = G(z) - F(z)$. Equation \ref{eq:functional_transformed1} therefore becomes
\begin{equation}\begin{split}\label{eq:functional_transformed2}
    A(z) = \frac{1}{2} \lr{G(z) - F(z)} G(z) = \frac{1}{2} \lr{G(z)^2 - F(z) G(z)}
\end{split}\end{equation}

We can now proceed to our derivation of the area formula.

\subsection{Extracting the Exact Formula}

We will make repeated use of the following instance of the binomial absorption identity (see \cite{knuth1989concrete}, Identity 5.7)
\begin{equation}\begin{split}\label{eq:absorption}
    {(s+1)m+1 \choose m} = \frac{(s+1)m+1}{sm+1} {(s+1)m \choose m}
\end{split}\end{equation}

\begin{theorem}[Exact Total Area Formula]\label{thm:exact_area} For all $n \geq 0, s \geq 1$,
    \begin{equation}\begin{split}\label{eq:total_area}
        A^s_n = \frac{1}{2} \lr{\sum_{k=0}^{n} {(s+1)k \choose k} {(s+1)(n-k) \choose n-k} - {(s+1)n+1 \choose n}}
    \end{split}\end{equation}
\end{theorem}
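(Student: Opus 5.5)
From Equation \ref{eq:functional_transformed2}, $A(z) = \frac{1}{2}\lr{G(z)^2 - F(z)G(z)}$, so it suffices to extract the coefficient of $z^n$ from each of the two products. The first term is immediate: by definition of $G(z)$, the Cauchy product gives
\begin{equation*}
    [G(z)^2]_n = \sum_{k=0}^{n} {(s+1)k \choose k}{(s+1)(n-k) \choose n-k},
\end{equation*}
which is exactly the sum in Equation \ref{eq:total_area}. The whole proof therefore reduces to the identity
\begin{equation*}
    [F(z)G(z)]_n = {(s+1)n+1 \choose n}.
\end{equation*}

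To prove this identity, I would go through a closed form for $G(z)$ in terms of $F(z)$. By Equation \ref{eq:functional_simplification}, $G = F + szF'$. Solving the derivative identity in Equation \ref{eq:fz_identities} gives $F' = F^{s+1}/(1-(s+1)zF^s)$, so $zF' = zF^{s+1}/(1-(s+1)zF^s)$. Now use $zF^{s+1} = F-1$ and $zF^s = (F-1)/F$. These give $1-(s+1)zF^s = \frac{F - (s+1)(F-1)}{F}$, and after simplifying,
\begin{equation*}
    G(z) = \frac{F(z)}{(s+1) - sF(z)}, \qquad F(z)G(z) = \frac{F(z)^2}{(s+1)-sF(z)}.
\end{equation*}

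Next I would find the coefficients of this rational expression in $F$ via Lagrange inversion. Set $F = 1 + w$ with $w = z(1+w)^{s+1}$. The standard Lagrange--B\"urmann form is
\begin{equation*}
    \frac{(1+w)^{\alpha}}{1 - (s+1)z(1+w)^{s}} = \sum_m {(s+1)m+\alpha \choose m} z^m.
\end{equation*}
Using $1-(s+1)z(1+w)^s = \frac{(s+1)-sF}{F}$, the left side equals $F^{\alpha+1}/((s+1)-sF)$. Taking $\alpha = 0$ recovers $G$, which is a useful check. Taking $\alpha = 1$ gives $F^2/((s+1)-sF)$, whose coefficient of $z^n$ is ${(s+1)n+1 \choose n}$, exactly as needed.

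If I want to avoid quoting that form of Lagrange inversion, I would instead verify the convolution identity $\sum_k F^s_k {(s+1)(n-k) \choose n-k} = {(s+1)n+1 \choose n}$. This is a Rothe--Hagen type identity, and one could prove it by writing $F^s_k = \frac{1}{(s+1)k+1}{(s+1)k+1 \choose k}$ and applying the absorption identity \eqref{eq:absorption}, presumably the reason the paper states it.

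I expect the main obstacle to be this second coefficient extraction. Once it is in hand, combining the two coefficients yields Equation \ref{eq:total_area}. For the case $n=0$, both sides are $\frac{1}{2}(1-1)=0$, consistent with $A^s_0 = 0$.
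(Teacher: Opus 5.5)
Your proof is correct. Its outer structure matches the paper's: both extract $[z^n]$ from $A(z)=\frac{1}{2}(G(z)^2-F(z)G(z))$, read the first term off as a Cauchy product, and reduce the theorem to $[F(z)G(z)]_n={(s+1)n+1 \choose n}$.

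The difference is in how you prove that identity. The paper rewrites $F^s_k=\frac{1}{(s+1)k+1}{(s+1)k+1 \choose k}$ by absorption and then quotes the Rothe--Hagen convolution, Identity 5.62 of \emph{Concrete Mathematics}, with $t=s+1$, $r=1$ and second parameter $0$. That is exactly your fallback. Note, though, that absorption alone does not prove the convolution; it only puts the sum into the form of the identity being cited.

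Your main route instead derives the closed form $G=F/((s+1)-sF)$ from $G=F+szF'$ and the functional equation. The algebra checks out: $1-(s+1)zF^s=((s+1)-sF)/F$, and the $sF$ terms cancel. You then read off $[F^{\alpha+1}/((s+1)-sF)]_n={(s+1)n+\alpha \choose n}$ from the second form of Lagrange--B\"urmann with $\phi(w)=(1+w)^{s+1}$.

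This is essentially the generating-function proof of the special case of the identity the paper cites. In \emph{Concrete Mathematics} that convolution is obtained by multiplying the generalized binomial series for $F$ by the one for $F/((s+1)-sF)$. So your version is self-contained modulo Lagrange inversion, and the $\alpha=0$ case gives a built-in check on $G$. The paper's version is shorter because it outsources this step to a citation.
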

\begin{proof}
We extract the coefficient of $z^n$ from each term of Equation \ref{eq:functional_transformed2}. The first term gives the sum in Equation \ref{eq:total_area} directly:
\begin{equation}\begin{split}
    [G(z)^2]_n = \sum_{k=0}^{n} {(s+1)k \choose k} {(s+1)(n-k) \choose n-k}
\end{split}\end{equation}
For the second term, Equation \ref{eq:absorption} rewrites $F^s_k = \frac{1}{sk+1} {(s+1)k \choose k} = \frac{1}{(s+1)k+1} {(s+1)k+1 \choose k}$, which puts the coefficient in the form of convolution Identity 5.62 from \cite{knuth1989concrete}:
\begin{equation}\begin{split}
    [F(z) G(z)]_n = \sum_{k=0}^{n} F^s_k {(s+1)(n-k) \choose n-k} = {(s+1)n+1 \choose n}
\end{split}\end{equation}

Subtracting these two coefficients and halving yields Equation \ref{eq:total_area}, completing the proof.
\end{proof}

\begin{remark}
In the specific case $s=1$, this becomes the familiar formula for the area of Dyck paths,
\begin{equation}\begin{split}
A^s_n = \frac{1}{2} \lr{\sum_{k=0}^{n} {2k \choose k} {2(n-k) \choose n-k} - {2n+1 \choose n}} = \frac{1}{2} \lr{4^n - {2n+1 \choose n}}
\end{split}\end{equation}
\end{remark}

\section{Area Bounds and Asymptotics in $n$}

\subsection{Binomial Estimates}

While the area formula we have derived is useful, in many cases we would like to have bounds or asymptotics that are easier to work with. We will begin by finding explicit upper and lower bounds for the expression
\begin{equation}\begin{split}\label{eq:inner_sum}
    \sum_{k=0}^{n} {(s+1)k \choose k} {(s+1)(n-k) \choose n-k}
\end{split}\end{equation}
and will then use these bounds to find bounds on $A^s_n$. This expression is also interesting in its own right, and appears in the mathematical literature for other problems. In this section, we will focus on finding bounds that are good for large $n$, though in the next section we will also examine bounds that are good for large $s$.

One particularly useful pair of bounds, published by St\u{a}nic\u{a} (\cite{stanica2001good}, Theorem 2.6), is

\begin{lemma}[St\u{a}nic\u{a}'s Binomial Bounds]\label{lem:stanica_bounds} Let $s, n \in \N$, with $s \geq 1$ and $n \geq 1$. Then, 
\begin{equation}\begin{split}
    \frac{1}{\sqrt{2\pi}} \sqrt{\frac{s+1}{s}} \cdot \frac{1}{\sqrt{n}} \lr{\frac{(s+1)^{s+1}}{s^s}}^n \cdot e^{-\frac{1}{8n}} < {(s+1)n \choose n} < \frac{1}{\sqrt{2\pi}} \sqrt{\frac{s+1}{s}} \cdot \frac{1}{\sqrt{n}} \lr{\frac{(s+1)^{s+1}}{s^s}}^n
\end{split}\end{equation}
\end{lemma}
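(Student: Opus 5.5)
Both inequalities follow from Stirling's formula with explicit error terms, applied to the three factorials in ${(s+1)n \choose n} = \frac{((s+1)n)!}{n!\,(sn)!}$. We will use Robbins' refinement: for every $m \geq 1$,
\begin{equation*}
m! = \sqrt{2\pi m}\,\lr{\frac{m}{e}}^m e^{r_m}, \qquad \frac{1}{12m+1} < r_m < \frac{1}{12m}.
\end{equation*}
Since $s, n \geq 1$, all three arguments $(s+1)n$, $n$, $sn$ are at least $1$, so the bound applies to each factorial.

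Substituting, the powers of $e$ cancel because $(s+1)n = n + sn$. The main factors combine to
\begin{equation*}
\frac{\sqrt{2\pi(s+1)n}}{\sqrt{2\pi n}\sqrt{2\pi sn}} \cdot \frac{((s+1)n)^{(s+1)n}}{n^n (sn)^{sn}} = \frac{1}{\sqrt{2\pi}}\sqrt{\frac{s+1}{s}}\cdot\frac{1}{\sqrt n}\lr{\frac{(s+1)^{s+1}}{s^s}}^n.
\end{equation*}
This is exactly the common main term $M$ of both bounds, and we get the exact identity ${(s+1)n \choose n} = M\,e^{E}$ with $E = r_{(s+1)n} - r_n - r_{sn}$. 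It therefore suffices to prove $-\frac{1}{8n} < E < 0$.

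\textbf{Upper bound.} Using the upper Robbins bound on $r_{(s+1)n}$ and the lower bounds on $r_n$ and $r_{sn}$,
\begin{equation*}
E < \frac{1}{12(s+1)n} - \frac{1}{12n+1} - \frac{1}{12sn+1}.
\end{equation*}
It is enough to show $\frac{1}{12(s+1)n} \le \frac{1}{12sn+1}$, which is equivalent to $12sn+1 \le 12sn+12n$. This holds for $n \geq 1$, and since the term $\frac{1}{12n+1}$ is strictly positive, we conclude $E < 0$.

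\textbf{Lower bound.} Reversing the roles, we drop the positive term $r_{(s+1)n}$ and get
\begin{equation*}
E > -\frac{1}{12n} - \frac{1}{12sn} \ge -\frac{1}{12n} - \frac{1}{12n} = -\frac{1}{6n}.
\end{equation*}
This is weaker than the required $-\frac{1}{8n}$, so the crude estimate is not enough. Getting the constant $\frac18$ is the main obstacle. We refine by keeping $r_{(s+1)n} > \frac{1}{12(s+1)n+1}$ and splitting into cases.

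For $s \geq 2$, we have $\frac{1}{12n} + \frac{1}{12sn} \le \frac{1}{12n}\lr{1 + \frac12} = \frac{1}{8n}$. The dropped positive term then makes the inequality strict, so $E > -\frac{1}{8n}$.

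For $s = 1$, we need
\begin{equation*}
\frac{1}{24n+1} - \frac{2}{12n} > -\frac{1}{8n}, \qquad\text{equivalently}\qquad \frac{1}{24n+1} > \frac{1}{24n}.
\end{equation*}
This is false, so the plain Robbins bounds fail by a hair in this case. Here we will instead use the sharper two-term bound $r_m > \frac{1}{12m} - \frac{1}{360m^3}$ for $r_{2n}$, together with $r_n < \frac{1}{12n}$. This gives
\begin{equation*}
E > \frac{1}{24n} - \frac{1}{2880n^3} - \frac{1}{6n} = -\frac{1}{8n} - \frac{1}{2880n^3},
\end{equation*}
which still misses by the tiny term $\frac{1}{2880n^3}$.

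The cleanest fallback is to cite St\u{a}nic\u{a}'s Theorem 2.6 directly, since the lemma is quoted from that source. If a self-contained proof of the $s=1$ case is wanted, we will use the classical bound ${2n \choose n} \ge \frac{4^n}{\sqrt{\pi(n+1/4)}}$ and compare it with $\frac{4^n}{\sqrt{\pi n}}e^{-1/(8n)}$. This reduces to
\begin{equation*}
e^{-1/(8n)} < \lr{1+\frac{1}{4n}}^{-1/2},
\end{equation*}
which follows from $\log(1+x) < x$ applied with $x = \frac{1}{4n}$.
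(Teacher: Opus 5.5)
The paper gives no proof of this lemma; it simply quotes St\u{a}nic\u{a}'s Theorem 2.6. Your reduction to $-\frac{1}{8n} < E < 0$ with $E = r_{(s+1)n} - r_n - r_{sn}$ is correct. Your Robbins estimates do prove the upper bound for every $s \geq 1$ and the lower bound for every $s \geq 2$.

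The gap is the lower bound for $s = 1$: your self-contained fallback rests on an inequality that is reversed. The classical sandwich is
\[
\frac{4^n}{\sqrt{\pi(n+1/2)}} \leq {2n \choose n} \leq \frac{4^n}{\sqrt{\pi(n+1/4)}},
\]
so $\frac{4^n}{\sqrt{\pi(n+1/4)}}$ is an upper bound. Already at $n=1$ one has $2 < 4/\sqrt{5\pi/4} \approx 2.018$. Your computation only shows that $\frac{4^n}{\sqrt{\pi n}}e^{-1/(8n)}$ also lies below $\frac{4^n}{\sqrt{\pi(n+1/4)}}$, which says nothing about how it compares with ${2n \choose n}$. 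The genuine lower bound with $n+\frac12$ does not help either, since $(1+\frac{1}{2n})^{-1/2} = 1 - \frac{1}{4n} + O(n^{-2})$ lies below $e^{-1/(8n)}$.

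All the crude attempts fail because the inequality is tight to third order: $r_{2n} - 2r_n = -\frac{1}{8n} + \frac{1}{192n^3} + O(n^{-5})$. Falling back on the citation is exactly what the paper does, so it adds no argument. Since your own computation shows Robbins' bounds cannot reach $s=1$, it is also worth checking that the cited result really covers that case.

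There are two clean ways to close the gap.

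First, your two-term attempt failed only because you kept the crude bound $r_n < \frac{1}{12n}$, while $r_n$ enters $E$ with coefficient $-2$. The remainder of Stirling's series has the sign of the first omitted term and is smaller in absolute value, so you also have $r_n < \frac{1}{12n} - \frac{1}{360n^3} + \frac{1}{1260n^5}$. Combined with your bound on $r_{2n}$, this gives
\[
E > -\frac{1}{8n} + \frac{1}{192n^3} - \frac{1}{630n^5} > -\frac{1}{8n} \qquad (n \geq 1).
\]

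Second, set $a_n := {2n \choose n}\frac{\sqrt{\pi n}}{4^n}e^{1/(8n)}$. A direct computation gives
\[
\frac{a_{n+1}}{a_n} = \left(1 + \frac{1}{4n(n+1)}\right)^{1/2} e^{-\frac{1}{8n(n+1)}} < 1
\]
by $\log(1+x) < x$. This is the same inequality you invoked, now applied with $x = \frac{1}{4n(n+1)}$. Since $a_n \to 1$ by Stirling's formula and $(a_n)$ is strictly decreasing, $a_n > 1$ for all $n \geq 1$, which is precisely the $s=1$ lower bound.
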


Since Lemma \ref{lem:stanica_bounds} requires a positive index, we separate out the $k = 0$ and $k = n$ terms of Equation \ref{eq:inner_sum}, each of which equals ${(s+1)n \choose n}$, rewriting the sum as
\begin{equation}\begin{split}\label{eq:inner_sum_rewritten}
    \sum_{k=1}^{n-1} {(s+1)k \choose k} {(s+1)(n-k) \choose n-k} + 2 {(s+1)n \choose n}
\end{split}\end{equation}
so that the lemma applies to every term of the remaining summation. The bounds we derive below will require $n \geq 2$, and so will not cover the cases $n = 0$ and $n = 1$.

\subsection{Upper and Lower Bounds on the Sum}

\begin{lemma}[Upper Bound on the Sum]\label{lem:inner_sum_upper_bound} Let $s, n \in \N$, with $s \geq 1$ and $n \geq 2$. Then, 
\begin{equation}\begin{split}
    \sum_{k=0}^{n} {(s+1)k \choose k} {(s+1)(n-k) \choose n-k} < \frac{1}{2 \pi} \frac{s+1}{s} \lr{\frac{(s+1)^{s+1}}{s^s}}^n \lr{\pi - \frac{2}{\sqrt{n}}} + 2 {(s+1)n \choose n}
\end{split}\end{equation}
\end{lemma}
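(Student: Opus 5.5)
We use the rewritten form \eqref{eq:inner_sum_rewritten}, which isolates the two boundary terms $2\binom{(s+1)n}{n}$. It then suffices to bound the interior sum over $1 \le k \le n-1$. Write $c := \frac{(s+1)^{s+1}}{s^s}$. For $1 \le k \le n-1$ both $k$ and $n-k$ are positive, so the upper half of Lemma \ref{lem:stanica_bounds} applies to each factor. Multiplying the two bounds gives
\begin{equation*}
\binom{(s+1)k}{k}\binom{(s+1)(n-k)}{n-k} < \frac{1}{2\pi}\,\frac{s+1}{s}\, c^{n}\, \frac{1}{\sqrt{k(n-k)}} ,
\end{equation*}
because $c^k c^{n-k} = c^n$. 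The inequality is strict, and the range $1 \le k \le n-1$ is nonempty since $n \ge 2$. Summing over $k$ reduces the lemma to the purely analytic estimate
\begin{equation*}
S_n := \sum_{k=1}^{n-1} \frac{1}{\sqrt{k(n-k)}} \le \pi - \frac{2}{\sqrt{n}} .
\end{equation*}

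The function $f(x) = (x(n-x))^{-1/2}$ is convex on $(0,n)$, with $\int_0^n f = \pi$. To exploit convexity, compare each term $f(k)$ with the integral of $f$ over $[k-\tfrac12,k+\tfrac12]$ (midpoint rule). By convexity,
\begin{equation*}
f(k) \le \int_{k-1/2}^{k+1/2} f(x)\,dx ,
\end{equation*}
so that $S_n \le \int_{1/2}^{n-1/2} f = \pi - 2\int_0^{1/2} f$. Using $n - x \le n$, we get $\int_0^{1/2} f \ge \frac{1}{\sqrt n}\int_0^{1/2} x^{-1/2}\,dx = \frac{\sqrt 2}{\sqrt n}$. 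This yields $S_n \le \pi - \frac{2\sqrt2}{\sqrt n}$, which is even stronger than the required $\pi - \frac{2}{\sqrt n}$.

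The main point to check is that the midpoint comparison is legitimate. It needs only convexity of $f$ on each interval $[k-\tfrac12,k+\tfrac12] \subset (0,n)$, and this holds because $f$ is the composition of the convex decreasing function $t \mapsto t^{-1/2}$ with the concave positive function $x(n-x)$. The evaluation of $\int_0^n f$ via the substitution $x = n\sin^2\theta$ is routine. The small case $n = 2$ gives $S_2 = 1 \le \pi - \sqrt2$, consistent with the estimate.

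Combining the interior bound with the boundary term $2\binom{(s+1)n}{n}$ gives the stated inequality, which is strict because each interior term satisfies a strict inequality.
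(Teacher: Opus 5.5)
Your proof is correct and follows the paper's argument. You apply St\u{a}nic\u{a}'s upper bound termwise, then use convexity of $1/\sqrt{k(n-k)}$ with the midpoint (left Hermite--Hadamard) comparison on $[k-\frac{1}{2},k+\frac{1}{2}]$ to reduce to $\int_{1/2}^{n-1/2}$. The only cosmetic difference is the last step: you bound the two excised tails directly via $n-x\le n$, whereas the paper evaluates the integral as $\pi-4\arcsin(1/\sqrt{2n})$ and uses $\arcsin x\ge x$; both give the same bound $\pi-2\sqrt{2}/\sqrt{n}$.
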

\begin{proof}
Applying Lemma \ref{lem:stanica_bounds} to Equation \ref{eq:inner_sum_rewritten} yields
\begin{equation}\begin{split}\label{eq:initial_upper_bound}
    \frac{1}{2 \pi} \frac{s+1}{s} \lr{\frac{(s+1)^{s+1}}{s^s}}^n \cdot \sum_{k=1}^{n-1} \frac{1}{\sqrt{k(n-k)}} + 2 {(s+1)n \choose n}
\end{split}\end{equation}

We now bound the remaining sum using convexity. Note that for any $p, q > 0$, the function $k^{-p} (n-k)^{-q}$ is convex on $(0,n)$: its logarithm $-p \ln{k} - q \ln{(n-k)}$ has second derivative $\frac{p}{k^2} + \frac{q}{(n-k)^2} > 0$, and exponentiating preserves convexity by the composition rule in \cite[Section 3.2.4]{boyd2004convex}. In particular, the summand $f(k) = \frac{1}{\sqrt{k(n-k)}}$ is convex in $k$ on $(0,n)$. This lets us apply the Hermite-Hadamard inequality (see \cite{niculescu2006convex}), which states that any function $g$ that is convex on $[a,b]$ satisfies
\begin{equation}\begin{split}\label{eq:hermite_hadamard}
    g\lr{\frac{a+b}{2}} \leq \frac{1}{b-a} \int_{a}^{b} g(x) dx \leq \frac{g(a) + g(b)}{2}
\end{split}\end{equation}

Applying the left half of Equation \ref{eq:hermite_hadamard} to $f$ on each interval $\left[k - \frac{1}{2}, k + \frac{1}{2}\right]$, and summing over $1 \leq k \leq n-1$, yields
\begin{equation}\begin{split}
    \sum_{k=1}^{n-1} \frac{1}{\sqrt{k(n-k)}} \leq \int_{\frac{1}{2}}^{n-\frac{1}{2}} \frac{1}{\sqrt{k(n-k)}} dk
\end{split}\end{equation}

For any $0 < t \leq \frac{n}{2}$, this integral evaluates to
\begin{equation}\begin{split}\label{eq:arcsin_integral}
    \int_{t}^{n-t} \frac{1}{\sqrt{k(n-k)}} dk = 2\arcsin\lr{\sqrt{1 - \frac{t}{n}}} - 2\arcsin\lr{\sqrt{\frac{t}{n}}} = \pi - 4\arcsin\lr{\sqrt{\frac{t}{n}}}
\end{split}\end{equation}
where the second equality uses the identity $\arcsin\lr{\sqrt{x}} + \arcsin\lr{\sqrt{1-x}} = \frac{\pi}{2}$. Taking $t = \frac{1}{2}$ and applying the Taylor series lower bound $\arcsin(x) \geq x$ gives
\begin{equation}\begin{split}
    \sum_{k=1}^{n-1} \frac{1}{\sqrt{k(n-k)}} \leq \pi - 4\arcsin\lr{\frac{1}{\sqrt{2n}}} \leq \pi - \frac{2\sqrt{2}}{\sqrt{n}} < \pi - \frac{2}{\sqrt{n}}
\end{split}\end{equation}

Plugging this back into Equation \ref{eq:initial_upper_bound} completes the proof.
\end{proof}

\begin{remark}
While we could also simplify the ${(s+1)n \choose n}$ term in Lemma \ref{lem:inner_sum_upper_bound}, we will leave it unsimplified for the time being. This is because we will be able to combine it with similar terms when applying the bound to the formula for the total area.
\end{remark}

\begin{lemma}[Lower Bound on the Sum]\label{lem:inner_sum_lower_bound} Let $s, n \in \N$, with $s \geq 1$ and $n \geq 2$. Then, 
\begin{equation}\begin{split}
    \sum_{k=0}^{n} {(s+1)k \choose k} {(s+1)(n-k) \choose n-k} > \frac{1}{2 \pi} \frac{s+1}{s} \lr{\frac{(s+1)^{s+1}}{s^s}}^n \lr{\pi - \frac{5}{\sqrt{n}}} + 2 {(s+1)n \choose n}
\end{split}\end{equation}
\end{lemma}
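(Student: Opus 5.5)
We mirror the proof of Lemma \ref{lem:inner_sum_upper_bound}, now using the lower half of Lemma \ref{lem:stanica_bounds}. Applying it to each factor in the interior sum of Equation \ref{eq:inner_sum_rewritten} gives the lower bound
\begin{equation*}
    \frac{1}{2 \pi} \frac{s+1}{s} \lr{\frac{(s+1)^{s+1}}{s^s}}^n \cdot \sum_{k=1}^{n-1} \frac{e^{-\frac{1}{8k} - \frac{1}{8(n-k)}}}{\sqrt{k(n-k)}} + 2 {(s+1)n \choose n}
\end{equation*}
It therefore suffices to show that
\begin{equation*}
    \sum_{k=1}^{n-1} \frac{e^{-\frac{1}{8k} - \frac{1}{8(n-k)}}}{\sqrt{k(n-k)}} \geq \pi - \frac{5}{\sqrt{n}}
\end{equation*}
for $n \geq 2$.

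\textbf{Removing the exponential.} We use $e^{-x} \geq 1 - x$, together with the identity $\frac{1}{k} + \frac{1}{n-k} = \frac{n}{k(n-k)}$. This bounds the summand below by
\begin{equation*}
    \frac{1}{\sqrt{k(n-k)}} - \frac{n}{8 (k(n-k))^{3/2}}
\end{equation*}
We then handle the main sum and the correction sum separately.

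\textbf{The main sum.} Since $f(k) = (k(n-k))^{-1/2}$ is convex and decreasing on $(0, n/2]$, we compare it with an integral from the other side. Because $f$ is decreasing on the left half, $f(k) \geq \int_k^{k+1} f$ for $k < n/2$, and symmetrically on the right half. A cleaner route is the right half of Equation \ref{eq:hermite_hadamard} on each interval $[k, k+1]$ for $1 \le k \le n-2$. Summing gives
\begin{equation*}
    \int_1^{n-1} f \leq \sum_{k=1}^{n-1} f(k) - \frac{f(1) + f(n-1)}{2} \leq \sum_{k=1}^{n-1} f(k)
\end{equation*}
By Equation \ref{eq:arcsin_integral} with $t = 1$, this integral equals $\pi - 4\arcsin\lr{1/\sqrt{n}}$. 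For $n \geq 2$ we have $\arcsin(x) \leq \frac{\pi}{2} x$ on $[0,1]$, since $\arcsin$ is convex there. The main sum is therefore at least $\pi - \frac{2\pi}{\sqrt{n}}$. Since $2\pi > 5$, this is not sufficient on its own, so we keep the boundary term $\frac{f(1)+f(n-1)}{2} = (n-1)^{-1/2}$. This gives
\begin{equation*}
    \sum_{k=1}^{n-1} f(k) \geq \pi - 4\arcsin\lr{1/\sqrt{n}} + \frac{1}{\sqrt{n-1}}
\end{equation*}
The estimate $4\arcsin(x) \le 4x + c x^3$ then makes this $\pi - \frac{3}{\sqrt{n}}$ minus a small term, for $n$ beyond a small threshold.

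\textbf{The correction sum.} This is the main obstacle, since the constant $5$ leaves little slack. We bound
\begin{equation*}
    \sum_{k=1}^{n-1} \frac{n}{8 (k(n-k))^{3/2}} \leq 2 \sum_{k \leq n/2} \frac{n}{8 k^{3/2} (n/2)^{3/2}} = \frac{2^{3/2}}{4\sqrt{n}} \zeta(3/2) \approx \frac{1.85}{\sqrt{n}}
\end{equation*}
This uses $n - k \geq n/2$ on the left half. Combined with the main estimate, the total deficit is roughly $\frac{3 + 1.85}{\sqrt{n}} < \frac{5}{\sqrt{n}}$ once the cubic arcsine error is absorbed.

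\textbf{Small cases.} Because the margin is thin, we expect to verify the remaining finitely many small $n$ (say $2 \le n \le N_0$) directly from the exact exponential-weighted sum. If the constants do not close, the fallback is to keep $e^{-\frac{1}{8k}-\frac{1}{8(n-k)}}$ exactly for $k = 1$ and $k = n-1$, where the loss is concentrated, and to apply the linear bound only to the middle terms.
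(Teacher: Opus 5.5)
Your argument is correct and follows the same skeleton as the paper. Both use St\u{a}nic\u{a}'s lower bound, then $e^{-x}\ge 1-x$ with $\frac{1}{k}+\frac{1}{n-k}=\frac{n}{k(n-k)}$, then the trapezoidal half of Hermite--Hadamard on $[k,k+1]$, keeping the boundary term $\frac{1}{\sqrt{n-1}}$. You diverge from the paper only in the correction sum.

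The paper observes that $(k(n-k))^{-3/2}$ is convex and applies the midpoint half of Hermite--Hadamard on $[k-\frac{1}{2},k+\frac{1}{2}]$. It then integrates exactly using the antiderivative $\frac{2(2k-n)}{n^2\sqrt{k(n-k)}}$, which gives $\frac{n-1}{n\sqrt{2n-1}}\le\frac{1}{\sqrt{n}}$. With that much room, the crude chord bound $\arcsin x\le\frac{\pi\sqrt{2}}{4}x$ on $[0,\frac{1}{\sqrt{2}}]$ is enough for the main sum. The total deficit is then $\frac{\pi\sqrt{2}}{\sqrt{n}}\approx\frac{4.44}{\sqrt{n}}$, which beats $\frac{5}{\sqrt{n}}$ for every $n\ge 2$ with no casework.

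Your termwise bound via $n-k\ge\frac{n}{2}$ and $\zeta(3/2)$ avoids the antiderivative. The price is a correction of $\frac{\sqrt{2}}{2}\zeta(3/2)\frac{1}{\sqrt{n}}\approx\frac{1.847}{\sqrt{n}}$; note that your displayed equality there should be $\le$. Combined with the chord bound, this would overshoot $5$. That is why you need a sharper arcsine estimate, a threshold and small cases, and it leaves you only about $\frac{0.15}{\sqrt{n}}$ of slack.

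The loose ends you left open do close, so your fallback is unnecessary.
\begin{itemize}
\item The function $(\arcsin x-x)/x^3$ has a power series with nonnegative coefficients, so it is increasing. Hence on $[0,\frac{1}{\sqrt{2}}]$ you have $4\arcsin x\le 4x+(2\pi\sqrt{2}-8)x^3$, where $2\pi\sqrt{2}-8\approx 0.886$.
\item By the mean value theorem, $\frac{1}{\sqrt{n-1}}-\frac{1}{\sqrt{n}}\ge\frac{1}{2n^{3/2}}$.
\item Combining these, your linearized sum is at least $\pi-\frac{4.848}{\sqrt{n}}-\frac{0.386}{n^{3/2}}$, which is $\ge\pi-\frac{5}{\sqrt{n}}$ for all $n\ge 3$.
\item The case $n=2$ is trivial because $\pi-\frac{5}{\sqrt{2}}<0$.
\end{itemize}
If you drop the $\frac{1}{2n^{3/2}}$ gain, the threshold becomes $n\ge 6$, and $n=3,4,5$ are quick numerical checks.
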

\begin{proof}
We proceed similarly to the proof for the upper bound, this time applying the lower bound from Lemma \ref{lem:stanica_bounds}. Combining the two resulting exponential factors via $\frac{1}{8k} + \frac{1}{8(n-k)} = \frac{n}{8k(n-k)}$, and then applying the standard bound $1 - x < e^{-x}$, yields the initial lower bound
\begin{equation}\begin{split}\label{eq:initial_lower_bound}
    \frac{1}{2 \pi} \frac{s+1}{s} \lr{\frac{(s+1)^{s+1}}{s^s}}^n \cdot \sum_{k=1}^{n-1} \lr{\frac{1}{\sqrt{k(n-k)}} - \frac{n}{8} \cdot \frac{1}{(k(n-k))^{\frac{3}{2}}}} + 2 {(s+1)n \choose n}
\end{split}\end{equation}

We will now find a lower bound for the summation in Equation \ref{eq:initial_lower_bound}, bounding its two sums separately. For the first sum, recall from the proof of Lemma \ref{lem:inner_sum_upper_bound} that $\frac{1}{\sqrt{k(n-k)}}$ is convex in $k$ on $(0,n)$. Applying the right half of the Hermite-Hadamard inequality (Equation \ref{eq:hermite_hadamard}) on each interval $[k, k+1]$, and summing over $1 \leq k \leq n-2$, gives
\begin{equation}\begin{split}
    \sum_{k=1}^{n-1} \frac{1}{\sqrt{k(n-k)}} \geq \int_{1}^{n-1} \frac{1}{\sqrt{k(n-k)}} dk + \frac{1}{\sqrt{n-1}} \geq \\
    \pi - 4\arcsin\lr{\frac{1}{\sqrt{n}}} + \frac{1}{\sqrt{n}} \geq \pi - \lr{\pi\sqrt{2} - 1} \frac{1}{\sqrt{n}}
\end{split}\end{equation}
where the integral was evaluated in Equation \ref{eq:arcsin_integral} (with $t = 1$). The final step uses the bound $\arcsin(x) \leq \frac{\pi \sqrt{2}}{4} x$ for $0 \leq x \leq \frac{1}{\sqrt{2}}$, which holds because $\arcsin$ is convex and so lies below its chord on $\left[0, \frac{1}{\sqrt{2}}\right]$.

For the second sum, the summand $\frac{1}{(k(n-k))^{\frac{3}{2}}}$ is again of the form $k^{-p} (n-k)^{-q}$, hence convex on $(0,n)$. Applying the left half of the Hermite-Hadamard inequality on each interval $\left[k - \frac{1}{2}, k + \frac{1}{2}\right]$, and summing over $1 \leq k \leq n-1$, gives
\begin{equation}\begin{split}
    \frac{n}{8} \sum_{k=1}^{n-1} \frac{1}{(k(n-k))^{\frac{3}{2}}} \leq \frac{n}{8} \int_{\frac{1}{2}}^{n-\frac{1}{2}} \frac{1}{(k(n-k))^{\frac{3}{2}}} dk = \frac{n-1}{n\sqrt{2n-1}} \leq \frac{1}{\sqrt{n}}
\end{split}\end{equation}
where the integral was evaluated using the antiderivative $\frac{2(2k-n)}{n^2 \sqrt{k(n-k)}}$, and the final step uses $2n-1 \geq n$.

Combining these two bounds yields
\begin{equation}\begin{split}
    \sum_{k=1}^{n-1} \lr{\frac{1}{\sqrt{k(n-k)}} - \frac{n}{8} \cdot \frac{1}{(k(n-k))^{\frac{3}{2}}}} \geq \pi - \pi\sqrt{2} \cdot \frac{1}{\sqrt{n}} > \pi - \frac{5}{\sqrt{n}}
\end{split}\end{equation}
since $\pi\sqrt{2} < 5$. Plugging this back into Equation \ref{eq:initial_lower_bound} completes the proof.
\end{proof}

It is worth noting that the bounds in Lemmas \ref{lem:inner_sum_lower_bound} and \ref{lem:inner_sum_upper_bound} converge asymptotically, giving
\begin{corollary}[Asymptotics of the Sum]\label{cor:inner_sum_asymp} Let $s,n \in \N$, with $s \geq 1$. Then, as $n$ grows large,
\begin{equation}\begin{split}
    \sum_{k=0}^{n} {(s+1)k \choose k} {(s+1)(n-k) \choose n-k} \sim \frac{1}{2} \frac{s+1}{s} \lr{\frac{(s+1)^{s+1}}{s^s}}^n
\end{split}\end{equation}
\end{corollary}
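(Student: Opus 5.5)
The plan is to squeeze the sum between Lemma \ref{lem:inner_sum_lower_bound} and Lemma \ref{lem:inner_sum_upper_bound}. Write $\rho := \frac{(s+1)^{s+1}}{s^s}$ and $M_n := \frac{1}{2}\frac{s+1}{s}\rho^n$, the claimed asymptotic. For $n \geq 2$, dividing both lemmas by $M_n$ gives
\begin{equation*}
1 - \frac{5}{\pi\sqrt{n}} + \frac{2}{M_n}{(s+1)n \choose n} \;<\; \frac{1}{M_n}\sum_{k=0}^{n} {(s+1)k \choose k} {(s+1)(n-k) \choose n-k} \;<\; 1 - \frac{2}{\pi\sqrt{n}} + \frac{2}{M_n}{(s+1)n \choose n}.
\end{equation*}
It therefore suffices to show that the boundary term $\frac{2}{M_n}{(s+1)n \choose n}$ tends to $0$.

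For this, use the upper half of Lemma \ref{lem:stanica_bounds}:
\begin{equation*}
{(s+1)n \choose n} < \frac{1}{\sqrt{2\pi}}\sqrt{\frac{s+1}{s}}\,\frac{\rho^n}{\sqrt{n}}.
\end{equation*}
This gives
\begin{equation*}
0 < \frac{2}{M_n}{(s+1)n \choose n} < \frac{4}{\sqrt{2\pi}}\sqrt{\frac{s}{s+1}}\cdot\frac{1}{\sqrt{n}} = O(n^{-1/2}),
\end{equation*}
since $s$ is fixed. Both outer bounds are then of the form $1 + O(n^{-1/2})$, and they converge to $1$ as $n \to \infty$. The ratio of the sum to $M_n$ therefore tends to $1$, which is exactly the stated asymptotic.

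There is no real obstacle here. The only point needing care is that the boundary term $2{(s+1)n \choose n}$ is genuinely of lower order: it is smaller than the main term by a factor of $\sqrt{n}$, and Stan\u{a}c\u{a}'s upper bound gives this directly. As a byproduct, the argument yields the explicit relative error rate $O(n^{-1/2})$.
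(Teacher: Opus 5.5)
Your proposal is correct and is the same argument as the paper, which simply observes that the bounds of Lemmas \ref{lem:inner_sum_lower_bound} and \ref{lem:inner_sum_upper_bound} converge asymptotically. You also make explicit the one step the paper leaves unstated: the upper half of St\u{a}nic\u{a}'s bound shows that the boundary term $2{(s+1)n \choose n}$ is smaller than the main term by a factor of order $n^{-1/2}$, exactly as you compute.
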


\begin{remark}\label{rem:chess_guy_conjecture}
This confirms a conjecture of Vaclav Kotesovec. In the formula section of the OEIS entry \cite{oeisA006256}, extending an observation of Benoit Cloitre for the case $m=3$, Kotesovec stated without proof (November 2012) that
\begin{equation*}
    \sum_{k=0}^{n} {mk \choose k} {m(n-k) \choose n-k} \sim \frac{1}{2} \frac{m}{m-1} \lr{\frac{m^m}{(m-1)^{m-1}}}^n
\end{equation*}
for general $m$. Setting $m = s+1$, this is precisely the statement of Corollary \ref{cor:inner_sum_asymp}.
\end{remark}

\begin{remark}\label{rem:merlini_tennis_balls}
One interesting appearance of this sum is in \cite{MERLINI2002307}, in which the authors discuss the $s$-tennis ball problem. In particular, in Theorem 5.2, the authors include this sum as part of the formula for one of their main results. Lemmas \ref{lem:inner_sum_lower_bound} and \ref{lem:inner_sum_upper_bound}, along with this corollary, can be used to find bounds and asymptotics for their formula.
\end{remark}

\subsection{Bounds and Asymptotics for the Area}

Now that we have good bounds on Equation \ref{eq:inner_sum}, we can apply them to some relevant problems. We will begin by applying them to our total area formula (Equation \ref{eq:total_area}).
\begin{theorem}[Bounds on Total Area]\label{thm:total_area_bounds} Let $s, n \in \N$, with $s \geq 1$ and $n \geq 2$. Then, 
\begin{equation}\begin{split}
    \frac{1}{4 \pi} \frac{s+1}{s} \lr{\frac{(s+1)^{s+1}}{s^s}}^n \lr{\pi - \frac{5}{\sqrt{n}}} < A^s_n < \frac{1}{4\pi} \frac{s+1}{s} \lr{\frac{(s+1)^{s+1}}{s^s}}^n \lr{ \pi + \frac{1}{\sqrt{n}} }
\end{split}\end{equation}
\end{theorem}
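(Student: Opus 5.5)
We plug the bounds from Lemmas \ref{lem:inner_sum_upper_bound} and \ref{lem:inner_sum_lower_bound} into the exact formula of Theorem \ref{thm:exact_area}. Write $B := \frac{(s+1)^{s+1}}{s^s}$. By Equation \ref{eq:total_area},
\begin{equation*}
2A^s_n = \sum_{k=0}^{n} {(s+1)k \choose k}{(s+1)(n-k) \choose n-k} - {(s+1)n+1 \choose n}.
\end{equation*}
Using the absorption identity (Equation \ref{eq:absorption}), we rewrite ${(s+1)n+1 \choose n} = \frac{(s+1)n+1}{sn+1}{(s+1)n \choose n}$. Combined with the $2{(s+1)n \choose n}$ term appearing in both lemmas, the leftover binomial contribution is
\begin{equation*}
\lr{2 - \frac{(s+1)n+1}{sn+1}}{(s+1)n \choose n} = \frac{(s-1)n+1}{sn+1}{(s+1)n \choose n}.
\end{equation*}
This coefficient lies in $(0,1]$ for $s \ge 1$, and tends to $\frac{s-1}{s} < 1$.

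For the \textbf{lower bound}, this leftover term is positive, so we simply drop it. Lemma \ref{lem:inner_sum_lower_bound} then gives $2A^s_n > \frac{1}{2\pi}\frac{s+1}{s}B^n\lr{\pi - \frac{5}{\sqrt n}}$, and halving yields the claimed lower bound directly.

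For the \textbf{upper bound}, Lemma \ref{lem:inner_sum_upper_bound} gives
\begin{equation*}
2A^s_n < \frac{1}{2\pi}\frac{s+1}{s}B^n\lr{\pi - \frac{2}{\sqrt n}} + \frac{(s-1)n+1}{sn+1}{(s+1)n \choose n}.
\end{equation*}
We bound the last term using the upper half of Lemma \ref{lem:stanica_bounds} together with $\frac{(s-1)n+1}{sn+1} \le 1$, so it is less than $\frac{1}{\sqrt{2\pi}}\sqrt{\frac{s+1}{s}}\frac{1}{\sqrt n}B^n$. It then suffices to absorb this into $\frac{1}{2\pi}\frac{s+1}{s}B^n\cdot\frac{3}{\sqrt n}$, which turns the factor $\pi - \frac{2}{\sqrt n}$ into $\pi + \frac{1}{\sqrt n}$. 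The required inequality is
\begin{equation*}
\frac{1}{\sqrt{2\pi}}\sqrt{\frac{s+1}{s}} \le \frac{3}{2\pi}\frac{s+1}{s},
\quad\text{i.e.}\quad \sqrt{\frac{s+1}{s}} \ge \frac{2\pi}{3\sqrt{2\pi}} = \frac{\sqrt{2\pi}}{3} \approx 0.836.
\end{equation*}
This holds because $\sqrt{(s+1)/s} > 1$. Halving then gives the stated upper bound.

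The only delicate step is the bookkeeping of the binomial terms: checking the sign and size of the combined coefficient $\frac{(s-1)n+1}{sn+1}$, and verifying that the numeric constant $3$ suffices uniformly in $s$. Both checks are elementary, so no real obstacle is expected.
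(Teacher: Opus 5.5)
Your proposal is correct and follows essentially the same route as the paper. You plug the two inner-sum lemmas into the exact formula and use absorption to see that the leftover binomial term is positive and at most ${(s+1)n \choose n}$ for $2A^s_n$; the paper phrases this as $1 < \frac{(s+1)n+1}{sn+1} < 2$. You then drop that term for the lower bound, and for the upper bound apply St\u{a}nic\u{a}'s estimate and absorb it using $\sqrt{2\pi}\sqrt{s/(s+1)} < 3$, which is equivalent to your condition $\sqrt{(s+1)/s} \geq \sqrt{2\pi}/3$.
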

\begin{proof}
We can apply Lemmas \ref{lem:inner_sum_lower_bound} and \ref{lem:inner_sum_upper_bound} to the area equation, which gives
\begin{equation}\begin{split}
    \frac{1}{4 \pi} \frac{s+1}{s} \lr{\frac{(s+1)^{s+1}}{s^s}}^n \lr{\pi - \frac{5}{\sqrt{n}}} + {(s+1)n \choose n} - \frac{1}{2} {(s+1)n+1 \choose n} < \\
    A^s_n < \frac{1}{4 \pi} \frac{s+1}{s} \lr{\frac{(s+1)^{s+1}}{s^s}}^n \lr{\pi - \frac{2}{\sqrt{n}}} + {(s+1)n \choose n} - \frac{1}{2} {(s+1)n+1 \choose n}
\end{split}\end{equation}

To help simplify the remaining binomial coefficients, we can note that $1 < \frac{(s+1)n+1}{sn+1} < 2$ for $n \geq 1$, so by Equation \ref{eq:absorption},
\begin{equation}\begin{split}
    0 < {(s+1)n \choose n} - \frac{1}{2} {(s+1)n+1 \choose n} < \frac{1}{2} {(s+1)n \choose n}
\end{split}\end{equation}

Applying this gives
\begin{equation}\begin{split}
    \frac{1}{4 \pi} \frac{s+1}{s} \lr{\frac{(s+1)^{s+1}}{s^s}}^n \lr{\pi - \frac{5}{\sqrt{n}}} < A^s_n < \frac{1}{4 \pi} \frac{s+1}{s} \lr{\frac{(s+1)^{s+1}}{s^s}}^n \lr{\pi - \frac{2}{\sqrt{n}}} + \frac{1}{2} {(s+1)n \choose n}
\end{split}\end{equation}

This completes the proof for the lower bound, but we still need to simplify the upper bound. Applying the bounds from Lemma \ref{lem:stanica_bounds} to eliminate the ${(s+1)n \choose n}$ term yields
\begin{equation}\begin{split}
    A^s_n < \frac{1}{4 \pi} \frac{s+1}{s} \lr{\frac{(s+1)^{s+1}}{s^s}}^n \lr{\pi - \frac{2}{\sqrt{n}}} + \frac{1}{2 \sqrt{2\pi}} \sqrt{\frac{s+1}{s}} \lr{\frac{(s+1)^{s+1}}{s^s}}^n \cdot \frac{1}{\sqrt{n}} = \\
    \frac{1}{4\pi} \frac{s+1}{s} \lr{\frac{(s+1)^{s+1}}{s^s}}^n \lr{ \pi + \lr{-2 + \sqrt{2\pi} \sqrt{\frac{s}{s+1}}} \cdot \frac{1}{\sqrt{n}} }
\end{split}\end{equation}

We would like to simplify the $\frac{1}{\sqrt{n}}$ coefficient. We can note that $\sqrt{2\pi} \sqrt{\frac{s}{s+1}} < \sqrt{2\pi} < 3$, so we can obtain the upper bound
\begin{equation}\begin{split}
    A^s_n < \frac{1}{4\pi} \frac{s+1}{s} \lr{\frac{(s+1)^{s+1}}{s^s}}^n \lr{ \pi + \frac{1}{\sqrt{n}} }
\end{split}\end{equation}

This completes the proof.
\end{proof}

As a corollary, we obtain asymptotics for $A^s_n$,
\begin{corollary}[Total Area Asymptotics in $n$]\label{cor:total_n_asymp} Let $s, n \in \N$, with $s \geq 1$. Then, as $n$ grows large,
\begin{equation}\begin{split}
    A^s_n \sim \frac{1}{4} \frac{s+1}{s} \lr{\frac{(s+1)^{s+1}}{s^s}}^n
\end{split}\end{equation}
\end{corollary}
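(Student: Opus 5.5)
The plan is a squeeze argument based directly on Theorem \ref{thm:total_area_bounds}. Write
\[
M^s_n := \frac{1}{4}\frac{s+1}{s}\lr{\frac{(s+1)^{s+1}}{s^s}}^n
\]
for the claimed asymptotic expression. For fixed $s \geq 1$ and all $n \geq 2$, factoring $\frac{1}{\pi}\lr{\pi \pm c/\sqrt{n}} = 1 \pm \frac{c}{\pi\sqrt{n}}$ out of the two sides of Theorem \ref{thm:total_area_bounds} rewrites them as
\[
\lr{1 - \frac{5}{\pi\sqrt{n}}} M^s_n < A^s_n < \lr{1 + \frac{1}{\pi\sqrt{n}}} M^s_n .
\]
Dividing by $M^s_n > 0$ gives
\[
1 - \frac{5}{\pi\sqrt{n}} < \frac{A^s_n}{M^s_n} < 1 + \frac{1}{\pi\sqrt{n}} .
\]

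Both outer expressions tend to $1$ as $n \to \infty$, since $s$ is held fixed and the error terms depend only on $n$. The squeeze theorem then gives $A^s_n / M^s_n \to 1$, which is exactly the claimed asymptotic equivalence. The restriction $n \geq 2$ in the theorem is harmless, because asymptotics only concern large $n$.

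There is no real obstacle here. The one point to check is that the constants $5$ and $1$ in Theorem \ref{thm:total_area_bounds} are absolute and do not depend on $n$, so the relative error is $O(1/\sqrt{n})$ uniformly in $s$. This also yields the slightly stronger statement $A^s_n = M^s_n\lr{1 + O(n^{-1/2})}$ with an explicit implied constant.
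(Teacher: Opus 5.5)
Your proposal is correct and is exactly the argument the paper intends: the corollary is read off from Theorem \ref{thm:total_area_bounds} by dividing both bounds by $M^s_n$ and squeezing, since the relative errors $\frac{5}{\pi\sqrt{n}}$ and $\frac{1}{\pi\sqrt{n}}$ vanish as $n \to \infty$. Your extra observation is also valid: these error constants do not depend on $s$, so $A^s_n = M^s_n\,(1 + O(n^{-1/2}))$ holds uniformly in $s \geq 1$.
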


We will now move on to finding bounds on the average area, $\frac{A^s_n}{F^s_n}$.
\begin{theorem}[Bounds on Average Area]\label{thm:average_area_bounds} Let $s, n \in \N$, with $s \geq 1$ and $n \geq 2$. Then, 
\begin{equation}\begin{split}
    \frac{1}{2\sqrt{2\pi}} \sqrt{\frac{s+1}{s}} \lr{\pi \sqrt{n} - 5} (sn+1) < \frac{A^s_n}{F^s_n} < \frac{1}{2\sqrt{2\pi}} \sqrt{\frac{s+1}{s}} \lr{\pi \sqrt{n} + 2} (sn+1)
\end{split}\end{equation}
\end{theorem}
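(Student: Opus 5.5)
We divide the bounds of Theorem \ref{thm:total_area_bounds} by $F^s_n = \frac{1}{sn+1}{(s+1)n \choose n}$, controlling the binomial coefficient with Lemma \ref{lem:stanica_bounds}. Write $\beta = \frac{(s+1)^{s+1}}{s^s}$, so that
\begin{equation*}
    \frac{A^s_n}{F^s_n} = (sn+1)\frac{A^s_n}{{(s+1)n \choose n}}.
\end{equation*}

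\textbf{Lower bound.} We take the lower bound of Theorem \ref{thm:total_area_bounds} for the numerator and the upper bound ${(s+1)n \choose n} < \frac{1}{\sqrt{2\pi}}\sqrt{\frac{s+1}{s}}\frac{1}{\sqrt n}\beta^n$ for the denominator. The $\beta^n$ factors cancel, and $\frac{s+1}{s}$ divided by $\sqrt{\frac{s+1}{s}}$ leaves $\sqrt{\frac{s+1}{s}}$. The remaining constant is $\frac{\sqrt{2\pi}}{4\pi} = \frac{1}{2\sqrt{2\pi}}$. Finally, $\sqrt n\,(\pi - 5/\sqrt n) = \pi\sqrt n - 5$, which gives exactly the stated lower bound. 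For $n$ small this bound may be negative, but the inequality still holds strictly, since the quotient of a positive lower bound over an upper bound preserves strict inequality, and a negative left side is trivially below the positive average.

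\textbf{Upper bound.} This is the delicate step, because dividing by the Stănică lower bound introduces a factor $e^{\frac{1}{8n}}$. Done naively, this gives $\frac{A^s_n}{F^s_n} < \frac{1}{2\sqrt{2\pi}}\sqrt{\frac{s+1}{s}}(\pi\sqrt n + 1)e^{\frac{1}{8n}}(sn+1)$, and we must absorb the exponential into the increase of the constant from $1$ to $2$. I would therefore show $(\pi\sqrt n + 1)e^{\frac{1}{8n}} \le \pi\sqrt n + 2$ for $n \ge 2$. Using $e^x \le 1 + x + x^2$ for $0 \le x \le 1$, the excess is at most $(\pi\sqrt n + 1)\left(\frac{1}{8n} + \frac{1}{64n^2}\right)$. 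This is decreasing in $n$, and at $n = 2$ it is roughly $5.44 \cdot 0.0664 \approx 0.36 < 1$, so the inequality holds for all $n \ge 2$.

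If a tighter route is desired, one could instead revisit the proof of Theorem \ref{thm:total_area_bounds}, which actually yields the coefficient $-2 + \sqrt{2\pi}\sqrt{s/(s+1)}$ rather than $1$. That slack would also absorb the exponential, but the crude verification above already suffices. Multiplying the two resulting inequalities by $(sn+1)$ completes the proof.
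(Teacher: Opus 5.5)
Your proposal is correct and follows the paper's proof: divide the total-area bounds of Theorem \ref{thm:total_area_bounds} by $F^s_n$ using the matching side of St\u{a}nic\u{a}'s bounds, then absorb the factor $e^{\frac{1}{8n}}$ into the constant. The only differences are minor. The paper absorbs the exponential via $e^{\frac{1}{8n}} < 1 + \frac{1}{8n-1} \leq 1 + \frac{1}{\pi\sqrt{n}+1}$, whereas your $e^x \leq 1 + x + x^2$ bound with the $n=2$ check works equally well. Your observation about the lower bound at $n=2$ is a genuine point: there $\pi\sqrt{n} - 5 < 0$, so the division step would reverse the inequality and one must instead use positivity of the average, which the paper passes over silently.
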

\begin{proof}
We begin with the lower bound. Taking the lower bound on the total area from Theorem \ref{thm:total_area_bounds} and dividing by $F^s_n = \frac{1}{sn+1} {(s+1)n \choose n}$ gives
\begin{equation}\begin{split}
    \frac{A^s_n}{F^s_n} > \frac{\frac{1}{4 \pi} \frac{s+1}{s} \lr{\frac{(s+1)^{s+1}}{s^s}}^n \lr{\pi - \frac{5}{\sqrt{n}}}}{\frac{1}{sn+1} {(s+1)n \choose n}}
\end{split}\end{equation}

Applying the upper bound from Lemma \ref{lem:stanica_bounds} to the binomial coefficient in the denominator yields the desired lower bound
\begin{equation}\begin{split}
    \frac{A^s_n}{F^s_n} > \frac{1}{2\sqrt{2\pi}} \sqrt{\frac{s+1}{s}} \lr{\pi \sqrt{n} - 5} (sn+1)
\end{split}\end{equation}

For the upper bound, dividing by $F^s_n$ instead requires the lower bound from Lemma \ref{lem:stanica_bounds}, which introduces a factor of $e^{\frac{1}{8n}}$. Dividing the upper bound of Theorem \ref{thm:total_area_bounds} by $F^s_n$ and applying Lemma \ref{lem:stanica_bounds} thus gives
\begin{equation}\begin{split}
    \frac{A^s_n}{F^s_n} < \frac{1}{2\sqrt{2\pi}} \sqrt{\frac{s+1}{s}} \lr{\pi \sqrt{n} + 1} (sn+1) \cdot e^{\frac{1}{8n}}
\end{split}\end{equation}

Finally, since $e^{-x} > 1 - x$, we have $e^{\frac{1}{8n}} < \frac{1}{1 - \frac{1}{8n}} = 1 + \frac{1}{8n-1} \leq 1 + \frac{1}{\pi \sqrt{n} + 1}$, where the last step uses $8n - 1 \geq \pi \sqrt{n} + 1$ for $n \geq 1$. Hence $\lr{\pi \sqrt{n} + 1} e^{\frac{1}{8n}} < \pi \sqrt{n} + 2$, which yields the desired upper bound and completes the proof.
\end{proof}

As with Theorem \ref{thm:total_area_bounds}, we can use the bounds from Theorem \ref{thm:average_area_bounds} to find an asymptotic approximation for the average area.
\begin{corollary}[Average Area Asymptotics in $n$]\label{cor:average_n_asymp} Let $s, n \in \N$, with $s \geq 1$. Then, as $n$ grows large,
\begin{equation}\begin{split}
    \frac{A^s_n}{F^s_n} \sim \frac{1}{2} \sqrt{\frac{\pi}{2}} \sqrt{s(s+1)} \cdot n^{\frac{3}{2}}
\end{split}\end{equation}
\end{corollary}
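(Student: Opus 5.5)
The plan is to squeeze the average area between the two explicit bounds of Theorem \ref{thm:average_area_bounds} and show that both sides, divided by the claimed main term, tend to $1$. Write
\begin{equation*}
    c_s := \frac{1}{2\sqrt{2\pi}} \sqrt{\frac{s+1}{s}}.
\end{equation*}
Then, for every $n \geq 2$,
\begin{equation*}
    c_s \lr{\pi\sqrt{n} - 5}(sn+1) < \frac{A^s_n}{F^s_n} < c_s \lr{\pi\sqrt{n} + 2}(sn+1).
\end{equation*}

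First I will identify the leading term of either bound. Expanding gives
\begin{equation*}
    c_s(\pi\sqrt{n} + O(1))(sn+1) = c_s \pi s\, n^{3/2} + O(n).
\end{equation*}
The lower-order terms come from the constants $-5$ and $+2$ and from the $+1$ in $sn+1$. Simplifying the constant,
\begin{equation*}
    c_s \pi s = \frac{\pi s}{2\sqrt{2\pi}}\sqrt{\frac{s+1}{s}} = \frac{1}{2}\sqrt{\frac{\pi}{2}}\sqrt{s(s+1)},
\end{equation*}
which is exactly the constant in the statement. This uses $\pi/\sqrt{2\pi} = \sqrt{\pi/2}$ and $s\sqrt{(s+1)/s} = \sqrt{s(s+1)}$.

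Next I will divide both bounds by $\frac{1}{2}\sqrt{\frac{\pi}{2}}\sqrt{s(s+1)}\, n^{3/2}$. The lower ratio becomes
\begin{equation*}
    \lr{1 - \frac{5}{\pi\sqrt{n}}}\lr{1 + \frac{1}{sn}},
\end{equation*}
and the upper ratio becomes
\begin{equation*}
    \lr{1 + \frac{2}{\pi\sqrt{n}}}\lr{1 + \frac{1}{sn}}.
\end{equation*}
Since $s$ is fixed, both ratios tend to $1$ as $n \to \infty$. The squeeze theorem then gives the asymptotic equivalence.

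There is no real obstacle here. The only step requiring care is the algebra showing $c_s \pi s$ equals the stated constant. One should also note that the lower bound is eventually positive (for $n \geq 3$), though this is not needed for the squeeze argument.
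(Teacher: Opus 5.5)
Your proposal is correct and takes the same route as the paper, which obtains this corollary by squeezing $A^s_n/F^s_n$ between the two bounds of Theorem \ref{thm:average_area_bounds}. You also fill in the details the paper leaves implicit, namely the constant identity $c_s \pi s = \frac{1}{2}\sqrt{\frac{\pi}{2}}\sqrt{s(s+1)}$ and the limit ratios $\left(1 - \frac{5}{\pi\sqrt{n}}\right)\left(1 + \frac{1}{sn}\right)$ and $\left(1 + \frac{2}{\pi\sqrt{n}}\right)\left(1 + \frac{1}{sn}\right)$, and these are computed correctly.
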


\begin{remark}
In the specific case $s = 1$, this is the average area for $(n,n)$-Dyck paths,
\begin{equation}\begin{split}
    \frac{A^s_n}{F^s_n} \sim \frac{\sqrt{\pi}}{2} \cdot n^{\frac{3}{2}}
\end{split}\end{equation}
This specific case was derived by Merlini et al. in \cite{merlini} (Theorem 2.1).
\end{remark}

\section{Area Bounds and Asymptotics in $s$}

\subsection{Elementary Binomial Bounds}

While the bounds in the previous section hold for all $s \geq 1, n \geq 2$, they are not particularly good bounds if $s$ is large but $n$ is small. In this section, we will attempt to find bounds that are better for this case. This will allow us to find asymptotics for the total and average area for large $s$, just as we found asymptotics for large $n$ in the previous section.

We will begin by noting that for $s, n \in \N$ with $s \geq 1$ and $n \geq 0$,
\begin{equation}\begin{split}\label{eq:binomial_bounds_s}
    s^n \frac{n^n}{n!} \leq {(s+1)n \choose n} \leq (s+1)^n \frac{n^n}{n!}
\end{split}\end{equation}

These bounds follow by writing the binomial coefficient as a product of $n$ factors,
\begin{equation}\begin{split}
    {(s+1)n \choose n} = \frac{1}{n!} \prod_{i=0}^{n-1} \lr{(s+1)n - i}
\end{split}\end{equation}
and noting that since $0 \leq i \leq n-1 < n$, each factor lies between $sn$ and $(s+1)n$. (For $n = 0$, both products are empty and all three quantities equal $1$.)

These bounds converge asymptotically for large $s$, which cannot be said for the bounds given by Lemma \ref{lem:stanica_bounds}.

One other useful identity is Identity 1.124 from \cite{gould1972combinatorial}, which tells us that
\begin{equation}\begin{split}\label{eq:gould_simplification}
    \sum_{k=0}^{n} \frac{k^k \cdot (n-k)^{n-k}}{k! \cdot (n-k)!} = \sum_{k=0}^{n} \frac{n^k}{k!}
\end{split}\end{equation}

\subsection{Bounds on the Sum and Total Area}

\begin{lemma}[Bounds on the Sum for Large $s$]\label{lem:sum_bounds_s} Let $s, n \in \N$, with $s \geq 1$ and $n \geq 0$. Then, 
\begin{equation}\begin{split}
    s^n \sum_{k=0}^{n} \frac{n^k}{k!} \leq \sum_{k=0}^{n} {(s+1)k \choose k} {(s+1)(n-k) \choose n-k} \leq (s+1)^n \sum_{k=0}^{n} \frac{n^k}{k!}
\end{split}\end{equation}
\end{lemma}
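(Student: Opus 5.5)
The plan is to apply the elementary bounds of Equation \ref{eq:binomial_bounds_s} termwise to each summand, collect the powers of $s$ and $s+1$, and then evaluate the remaining sum with Gould's identity (Equation \ref{eq:gould_simplification}).

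Fix $0 \leq k \leq n$. Equation \ref{eq:binomial_bounds_s} holds for every index $m \geq 0$, including $m = 0$, so no boundary terms need to be separated out (unlike the Stănică approach). Applying it with $m = k$ and $m = n-k$ gives
\begin{equation*}
    s^{k} \frac{k^k}{k!} \cdot s^{n-k}\frac{(n-k)^{n-k}}{(n-k)!} \leq {(s+1)k \choose k}{(s+1)(n-k) \choose n-k} \leq (s+1)^{k}\frac{k^k}{k!}\cdot (s+1)^{n-k}\frac{(n-k)^{n-k}}{(n-k)!}.
\end{equation*}
Here we use the convention $0^0 = 1$, which is consistent with the $n=0$ case noted after Equation \ref{eq:binomial_bounds_s}. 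All quantities are nonnegative, so multiplying the two pairs of inequalities is legitimate. The powers combine to $s^n$ and $(s+1)^n$ respectively, independent of $k$.

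Summing over $0 \leq k \leq n$ and factoring out $s^n$ (respectively $(s+1)^n$), both bounds reduce to the sum $\sum_{k=0}^{n} \frac{k^k (n-k)^{n-k}}{k!(n-k)!}$. Equation \ref{eq:gould_simplification} replaces this with $\sum_{k=0}^{n} \frac{n^k}{k!}$, which gives exactly the stated inequalities.

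There is no real obstacle here. The only points needing care are the $0^0=1$ convention at $k=0$ and $k=n$, which must match the convention in Gould's identity, and the trivial case $n=0$, where every expression equals $1$.
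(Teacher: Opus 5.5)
Your proposal is correct and follows the paper's own proof essentially verbatim: bound both binomial coefficients in each summand with Equation \ref{eq:binomial_bounds_s}, combine the powers into $s^n$ and $(s+1)^n$, and then collapse the remaining sum with Gould's identity (Equation \ref{eq:gould_simplification}). Your remarks on nonnegativity when multiplying the inequalities and on the $0^0 = 1$ convention are appropriate details that the paper leaves implicit.
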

\begin{proof}
Applying Equation \ref{eq:binomial_bounds_s} to the binomial coefficients yields
\begin{equation}\begin{split}
    s^n \sum_{k=0}^{n} \frac{k^k \cdot (n-k)^{n-k}}{k! \cdot (n-k)!} \leq \sum_{k=0}^{n} {(s+1)k \choose k} {(s+1)(n-k) \choose n-k} \leq (s+1)^n \sum_{k=0}^{n} \frac{k^k \cdot (n-k)^{n-k}}{k! \cdot (n-k)!}
\end{split}\end{equation}

We then apply Equation \ref{eq:gould_simplification} to both the upper and lower bounds, which completes the proof.
\end{proof}

We now move on to the total area.
\begin{lemma}[Bounds on Total Area for Large $s$]\label{lem:total_area_bounds_s} Let $s, n \in \N$, with $s \geq 1$ and $n \geq 0$. Then,
\begin{equation}\begin{split}
    \frac{1}{2} \left( s^n \sum_{k=0}^{n} \frac{n^k}{k!} - \frac{s+1}{s} \cdot (s+1)^n \frac{n^n}{n!} \right) \leq A^s_n \leq \frac{1}{2} \left( (s+1)^n \sum_{k=0}^{n} \frac{n^k}{k!} - s^n \frac{n^n}{n!} \right)
\end{split}\end{equation}
\end{lemma}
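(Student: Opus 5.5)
Start from the exact formula of Theorem \ref{thm:exact_area},
\begin{equation*}
A^s_n = \frac{1}{2}\lr{\sum_{k=0}^{n} {(s+1)k \choose k}{(s+1)(n-k) \choose n-k} - {(s+1)n+1 \choose n}},
\end{equation*}
and bound its two pieces separately. Lemma \ref{lem:sum_bounds_s} already gives the sum lying between $s^n\sum_{k=0}^n \frac{n^k}{k!}$ and $(s+1)^n\sum_{k=0}^n \frac{n^k}{k!}$. So everything reduces to upper and lower bounds on the subtracted term ${(s+1)n+1 \choose n}$, each expressed through the elementary bounds in Equation \ref{eq:binomial_bounds_s}.

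For the upper bound on $A^s_n$, we need a lower bound on ${(s+1)n+1 \choose n}$. By the absorption identity (Equation \ref{eq:absorption}),
\begin{equation*}
{(s+1)n+1 \choose n} = \frac{(s+1)n+1}{sn+1}{(s+1)n \choose n} \geq {(s+1)n \choose n},
\end{equation*}
since the ratio is at least $1$. Applying the left inequality of Equation \ref{eq:binomial_bounds_s} then gives ${(s+1)n+1 \choose n}\geq s^n \frac{n^n}{n!}$. Combining this with the upper half of Lemma \ref{lem:sum_bounds_s} yields the stated upper bound.

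For the lower bound on $A^s_n$, we need an upper bound on ${(s+1)n+1 \choose n}$. The ratio satisfies
\begin{equation*}
\frac{(s+1)n+1}{sn+1} \leq \frac{s+1}{s},
\end{equation*}
because $s\big((s+1)n+1\big) \leq (s+1)(sn+1)$ reduces to $s\leq s+1$. Hence
\begin{equation*}
{(s+1)n+1 \choose n}\leq \frac{s+1}{s}{(s+1)n \choose n}\leq \frac{s+1}{s}(s+1)^n\frac{n^n}{n!},
\end{equation*}
using the right inequality of Equation \ref{eq:binomial_bounds_s}. Pairing this with the lower half of Lemma \ref{lem:sum_bounds_s} gives the stated lower bound.

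The only step needing care is the ratio inequality for the lower bound, together with the edge case $n=0$. There $A^s_0=0$, the lower bound evaluates to $\frac{1}{2}\big(1-\frac{s+1}{s}\big)<0$, and the upper bound evaluates to $\frac{1}{2}(1-1)=0$, so both inequalities hold trivially. The convention $0^0=1$ is the same one already used in Equation \ref{eq:binomial_bounds_s}. No other obstacles are expected, since all steps are direct substitutions of earlier results.
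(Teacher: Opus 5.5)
Your proposal is correct and follows the paper's proof essentially step for step. Both arguments apply Lemma \ref{lem:sum_bounds_s} and the absorption identity, bound the ratio $\frac{(s+1)n+1}{sn+1}$ between $1$ and $\frac{s+1}{s}$, and then apply Equation \ref{eq:binomial_bounds_s} with the inequality directions reversed because the binomial term is subtracted. Your separate check of the case $n=0$ is harmless but not needed, since the general argument already covers it.
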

\begin{proof}
Applying Lemma \ref{lem:sum_bounds_s} to Equation \ref{eq:total_area}, along with the absorption identity (Equation \ref{eq:absorption}), yields
\begin{equation}\begin{split}\label{eq:total_area_bounds_s}
    \frac{1}{2} \left( s^n \sum_{k=0}^{n} \frac{n^k}{k!} - \frac{(s+1)n+1}{sn+1} {(s+1)n \choose n} \right) \leq \\
    A^s_n \leq \frac{1}{2} \left( (s+1)^n \sum_{k=0}^{n} \frac{n^k}{k!} - \frac{(s+1)n+1}{sn+1} {(s+1)n \choose n} \right)
\end{split}\end{equation}

We then note that $1 \leq \frac{(s+1)n+1}{sn+1} < \frac{s+1}{s}$ and apply Equation \ref{eq:binomial_bounds_s} to the remaining binomial coefficients. Since these terms are subtracted, their upper bounds produce our lower bound and their lower bounds produce our upper bound; simplifying completes the proof.
\end{proof}

This gives asymptotics in $s$ for the total area.
\begin{corollary}[Total Area Asymptotics in $s$]\label{cor:total_s_asymp} Let $s, n \in \N$, with $n \geq 1$. Then, as $s$ grows large,
\begin{equation}\begin{split}
    A^s_n \sim \frac{1}{2} s^n \left( \sum_{k=0}^{n} \frac{n^k}{k!} - \frac{n^n}{n!} \right)
\end{split}\end{equation}
\end{corollary}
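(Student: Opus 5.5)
The plan is to divide the two-sided bound of Lemma \ref{lem:total_area_bounds_s} by $s^n$ and let $s\to\infty$ with $n\geq 1$ fixed; both sides should converge to the same constant, and a squeeze argument then gives the corollary. Write $S_n := \sum_{k=0}^{n} \frac{n^k}{k!}$, which depends only on $n$, and $c_n := S_n - \frac{n^n}{n!}$. The claimed asymptotic is $A^s_n / s^n \to \frac{1}{2} c_n$, so it suffices to show that both bounds, after division by $s^n$, tend to $\frac{1}{2} c_n$, and that $c_n > 0$.

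For the lower bound, dividing by $s^n$ gives
\begin{equation*}
\frac{1}{2}\lr{ S_n - \frac{s+1}{s}\lr{\frac{s+1}{s}}^{n}\frac{n^n}{n!} }.
\end{equation*}
Since $\frac{s+1}{s}\to 1$ and $n$ is fixed, $\lr{\frac{s+1}{s}}^{n+1}\to 1$. The expression therefore tends to $\frac{1}{2}(S_n - \frac{n^n}{n!})$.

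For the upper bound, dividing by $s^n$ gives
\begin{equation*}
\frac{1}{2}\lr{ \lr{\frac{s+1}{s}}^{n} S_n - \frac{n^n}{n!} },
\end{equation*}
which likewise tends to $\frac{1}{2}(S_n - \frac{n^n}{n!})$. By the squeeze theorem, $A^s_n/s^n \to \frac{1}{2} c_n$.

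To turn this limit into the statement ``$\sim$'', we need $c_n \neq 0$. This holds because $c_n = \sum_{k=0}^{n-1} \frac{n^k}{k!} \geq 1$ for $n\geq 1$, as the $k=0$ term alone equals $1$. This positivity check is also the reason for the hypothesis $n \geq 1$: for $n = 0$ we get $c_0 = 0$ while $A^s_0 = 0$, so the asymptotic equivalence is not meaningful there. Apart from this point, the argument is a routine limit computation.
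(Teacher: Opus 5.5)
Your proposal is correct and follows the same route as the paper: the corollary is read off from Lemma~\ref{lem:total_area_bounds_s} by dividing both bounds by $s^n$ and letting $s\to\infty$ with $n$ fixed, and your limit computations for both sides are right. Your explicit check that the limiting constant $\sum_{k=0}^{n-1} n^k/k! \geq 1$ is nonzero for $n \geq 1$ is a step the paper leaves implicit.
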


\subsection{Bounds on the Average Area}\label{sec:s_average_area}

Before deriving bounds and asymptotics for the average area $\frac{A^s_n}{F^s_n}$, we introduce the function that governs them: Ramanujan's $Q$-function,
\begin{equation}\begin{split}\label{eq:q_function}
    Q(n) := \sum_{k=1}^{n} \frac{n!}{(n-k)! \cdot n^k}
\end{split}\end{equation}

$Q(n)$ is best known through the birthday problem: when sampling uniformly with replacement from $n$ items, $Q(n)$ is the expected number of draws made before the first repetition (see \cite{flajolet1995ramanujan}). It is connected to the sums of the previous subsections by the identity
\begin{equation}\begin{split}\label{eq:q_identity}
    Q(n) + 1 = \sum_{k=0}^{n} \frac{n!}{(n-k)! \cdot n^k} = \sum_{j=0}^{n} \frac{n!}{j! \cdot n^{n-j}} = \frac{n!}{n^n} \sum_{j=0}^{n} \frac{n^j}{j!}
\end{split}\end{equation}
where the middle equality substitutes $j = n - k$.

We can now find bounds on the average area, using similar methods to those we used for the total area in Lemma \ref{lem:total_area_bounds_s}.
\begin{lemma}[Bounds on Average Area for Large $s$]\label{lem:average_area_bounds_s} Let $s, n \in \N$, with $s \geq 1$ and $n \geq 0$. Then,
\begin{equation}\begin{split}
    \frac{1}{2} (sn+1) \lr{\frac{s}{s+1}}^n \lr{Q(n)+1} - \frac{1}{2} ((s+1)n+1) \leq \\
    \frac{A^s_n}{F^s_n} \leq \frac{1}{2} (sn+1) \lr{\frac{s+1}{s}}^n \lr{Q(n)+1} - \frac{1}{2} ((s+1)n+1)
\end{split}\end{equation}
\end{lemma}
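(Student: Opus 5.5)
We will divide the exact formula of Theorem \ref{thm:exact_area} by $F^s_n = \frac{1}{sn+1}{(s+1)n \choose n}$ and handle the two terms separately. The subtracted term is exact: by the absorption identity (Equation \ref{eq:absorption}),
\begin{equation*}
\frac{1}{2}\frac{{(s+1)n+1 \choose n}}{F^s_n} = \frac{1}{2}\frac{(s+1)n+1}{sn+1}{(s+1)n \choose n}\cdot\frac{sn+1}{{(s+1)n \choose n}} = \frac{1}{2}\lr{(s+1)n+1}.
\end{equation*}
This produces the common term $-\frac{1}{2}((s+1)n+1)$ on both sides of the claimed inequality, with no estimation needed.

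For the main term we combine Lemma \ref{lem:sum_bounds_s} with Equation \ref{eq:binomial_bounds_s}. For the upper bound, we bound the sum above by $(s+1)^n\sum_{k=0}^n \frac{n^k}{k!}$. We bound ${(s+1)n\choose n}$ in the denominator below by $s^n\frac{n^n}{n!}$. This gives
\begin{equation*}
\frac{1}{2}\frac{\sum_{k}{(s+1)k\choose k}{(s+1)(n-k)\choose n-k}}{F^s_n} \le \frac{1}{2}(sn+1)\lr{\frac{s+1}{s}}^n \frac{n!}{n^n}\sum_{k=0}^n\frac{n^k}{k!}.
\end{equation*}
By Equation \ref{eq:q_identity}, this equals $\frac{1}{2}(sn+1)\lr{\frac{s+1}{s}}^n(Q(n)+1)$. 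The lower bound is symmetric: we use $s^n\sum_k \frac{n^k}{k!}$ for the sum and $(s+1)^n\frac{n^n}{n!}$ for the denominator, which yields the factor $\lr{\frac{s}{s+1}}^n$.

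The only point needing care is the degenerate case $n=0$. There $n^n/n!$ and $Q(0)$ are read with the conventions $0^0=1$ and the empty sum, so $Q(0)=0$. Both sides then reduce to $\frac{1}{2}\cdot 1 - \frac{1}{2} = 0 = A^s_0/F^s_0$, and Equation \ref{eq:q_identity} still holds since both its sides equal $1$. We will verify this separately, or note that all cited bounds were stated for $n\ge 0$.

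There is no real obstacle. The main thing to check is that the monotonicity directions are correct. The numerator bound and the denominator bound must be applied in opposite directions, and since the subtracted term is exact, no sign issue arises from it.
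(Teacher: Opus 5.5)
Your proposal is correct and is essentially the paper's proof. The paper also divides the intermediate bound by $F^s_n$; that bound is Equation \ref{eq:total_area_bounds_s}, i.e.\ the exact formula combined with Lemma \ref{lem:sum_bounds_s} and absorption. This makes the subtracted term exactly $\frac{1}{2}((s+1)n+1)$; the paper then bounds the denominator binomial via Equation \ref{eq:binomial_bounds_s} in the opposite direction and finishes with Equation \ref{eq:q_identity}. Your explicit $n=0$ check is a small addition that the paper leaves implicit.
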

\begin{proof}
Take Equation \ref{eq:total_area_bounds_s} and divide by $F^s_n$ to obtain
\begin{equation}\begin{split}
    \frac{\frac{1}{2} s^n \sum_{k=0}^{n} \frac{n^k}{k!}}{\frac{1}{sn+1} {(s+1)n \choose n}} - \frac{1}{2} ((s+1)n+1) \leq \frac{A^s_n}{F^s_n} \leq \frac{\frac{1}{2} (s+1)^n \sum_{k=0}^{n} \frac{n^k}{k!}}{\frac{1}{sn+1} {(s+1)n \choose n}} - \frac{1}{2} ((s+1)n+1)
\end{split}\end{equation}

We then apply Equation \ref{eq:binomial_bounds_s} to the remaining binomial coefficients. Since these appear in denominators, the upper bound in Equation \ref{eq:binomial_bounds_s} produces the lower bound below, and vice versa. This gives
\begin{equation}\begin{split}
    \frac{1}{2} (sn+1) \lr{\frac{s}{s+1}}^n \cdot \frac{n!}{n^n} \sum_{k=0}^{n} \frac{n^k}{k!} - \frac{1}{2} ((s+1)n+1) \leq \\
    \frac{A^s_n}{F^s_n} \leq \frac{1}{2} (sn+1) \lr{\frac{s+1}{s}}^n \cdot \frac{n!}{n^n} \sum_{k=0}^{n} \frac{n^k}{k!} - \frac{1}{2} ((s+1)n+1)
\end{split}\end{equation}

Applying Equation \ref{eq:q_identity} completes the proof.
\end{proof}

We can now obtain asymptotics in $s$ for the average area.
\begin{corollary}[Average Area Asymptotics in $s$]\label{cor:average_s_asymp} Let $s, n \in \N$, with $n \geq 1$. Then, as $s$ grows large,
\begin{equation}\begin{split}
    \frac{A^s_n}{F^s_n} \sim \frac{1}{2} sn \cdot Q(n)
\end{split}\end{equation}
\end{corollary}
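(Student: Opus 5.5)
We would derive Corollary \ref{cor:average_s_asymp} directly from the two-sided bounds of Lemma \ref{lem:average_area_bounds_s} by a squeeze argument, with $n \geq 1$ fixed and $s \to \infty$. The plan is to divide both the lower and the upper bound by $\frac{1}{2} sn \cdot Q(n)$ and show that each quotient tends to $1$. Since $n \geq 1$, the $k=1$ term of Equation \ref{eq:q_function} is already $1$, so $Q(n) \geq 1 > 0$ and the normalization is legitimate.

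First, expand the upper bound. Because $n$ is fixed, $\lr{\frac{s+1}{s}}^n = 1 + O(1/s)$, so
\begin{equation*}
    \frac{1}{2}(sn+1)\lr{\frac{s+1}{s}}^n\lr{Q(n)+1} = \frac{1}{2} sn\lr{Q(n)+1} + O(1).
\end{equation*}
Subtracting $\frac{1}{2}((s+1)n+1) = \frac{1}{2} sn + O(1)$ then gives $\frac{1}{2} sn\, Q(n) + O(1)$. The key point is that the $+1$ inside $Q(n)+1$ cancels exactly against the leading $\frac{1}{2} sn$ of the subtracted term. Without this cancellation the limit would be $\frac{1}{2} sn(Q(n)+1)$ rather than $\frac{1}{2} sn\, Q(n)$, so this step needs to be stated carefully.

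The lower bound is handled identically, using $\lr{\frac{s}{s+1}}^n = 1 + O(1/s)$, and also gives $\frac{1}{2} sn\, Q(n) + O(1)$. The implied constants depend only on $n$, for instance through $Q(n) \leq n$. Dividing both sides by $\frac{1}{2} sn\, Q(n)$ gives $1 + O(1/s)$ on each side, and the squeeze yields the claimed asymptotic.

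The only real obstacle is bookkeeping. One must confirm that every error term really is $O(1)$ in $s$, for example $\frac{1}{2}(sn+1)\lr{(1+1/s)^n - 1}(Q(n)+1) = O(1)$ because $(1+1/s)^n - 1 = O(1/s)$. One must also check that the bounded remainder is negligible relative to $sn$, which follows from $Q(n) \geq 1$.
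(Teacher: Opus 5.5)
Your proposal is correct and takes the same route as the paper, which states Corollary~\ref{cor:average_s_asymp} as an immediate consequence of the two-sided bounds in Lemma~\ref{lem:average_area_bounds_s}. Your squeeze argument fills in the verification the paper leaves implicit: both bounds equal $\frac{1}{2}sn\,Q(n)+O(1)$ for fixed $n$, because $\frac{1}{2}sn$ cancels against the $+1$ in $Q(n)+1$, and $Q(n)\geq 1$ makes the bounded remainder negligible.
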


\begin{remark}\label{rem:urn_ev}
By the birthday interpretation of $Q(n)$, Corollary \ref{cor:average_s_asymp} states that, for large $s$, the average area is approximately $\frac{sn}{2}$ times the expected number of draws made before the first repetition. There is likely some kind of combinatorial explanation for this connection, but it is not immediately clear to us.
\end{remark}

\section{Conclusions and Future Directions}

While we have addressed the $(sn,n)$ case, the area formulas for Dyck paths in the more general case of $(m,n)$-shaped rectangles are still unknown. One of the primary issues with approaching this case is that touchpoint decompositions are more difficult to apply, since the number of touchpoints differs from case to case. For example, if $m$ and $n$ are coprime, there are \textit{no} touchpoints, which is particularly difficult to approach.

We note in Remark \ref{rem:chess_guy_conjecture} that we have proved a conjecture of Vaclav Kotesovec regarding the asymptotics of the sum $\sum_{k=0}^{n} {(s+1)k \choose k} {(s+1)(n-k) \choose n-k}$. However, there is still work to be done here, as the corresponding OEIS entries (\cite{oeisA006256}, \cite{oeisA078995}) record refined conjectures, due to Benoit Cloitre and Kotesovec, stating that for $s = 2$, we have
\begin{equation}\begin{split}
    \sum_{k=0}^{n} {3k \choose k} {3(n-k) \choose n-k} = \frac{3}{4} \lr{\frac{3^3}{2^2}}^n \lr{1 + \frac{c}{\sqrt{n}} + o\lr{\frac{1}{\sqrt{n}}}} \text{ with } c = \frac{2}{3 \sqrt{3\pi}}
\end{split}\end{equation}

and that for $s = 3$,
\begin{equation}\begin{split}
    \sum_{k=0}^{n} {4k \choose k} {4(n-k) \choose n-k} = \frac{4}{6} \lr{\frac{4^4}{3^3}}^n \lr{1 + \frac{c}{\sqrt{n}} + o\lr{\frac{1}{\sqrt{n}}}} \text{ with } c = \frac{2 \sqrt{2}}{3 \sqrt{3\pi}}
\end{split}\end{equation}

This implies the existence of more precise bounds and asymptotics than those we derived. The kernel-method framework of Banderier and Gittenberger (\cite{banderier2006area}), which produces full asymptotic expansions for area moments in the aperiodic case, is a natural candidate tool for such refinements. It is unclear how this conjecture extends to higher values of $s$. For $s = 4$, $s = 5$, and $s = 6$, numerical estimation of the $c$ value with Mathematica yields values of $c \approx 0.356823$, $c \approx 0.388460$, and $c \approx 0.410386$, but it is not obvious what the precise values of these should be. The fact that $c$ has such a clean closed form for $s = 2$, $s = 3$ (and $s = 1$, where the value is $c = 0$) seems to suggest that $c$ may have a similarly neat form for $s > 3$, but that form is unclear. If there is such a neat form, then there is immediately a question of whether the further expansion also has a neat form. For example, it is not difficult to imagine that instead of $1 + \frac{c}{\sqrt{n}} + o\lr{\frac{1}{\sqrt{n}}}$, we might be able to find some series of the form $1 + \frac{c_1}{\sqrt{n}} + \frac{c_2}{n} + \frac{c_3}{n^{\frac{3}{2}}} + \cdots$. If such a series existed, we would be able to approximate the sum $\sum_{k=0}^{n} {(s+1)k \choose k} {(s+1)(n-k) \choose n-k}$ to arbitrary precision for any $s$ and $n$.

As noted in Remark \ref{rem:urn_ev}, it seems as though there should be some kind of combinatorial interpretation of the asymptotics in $s$ for the average area, due to its connection with Ramanujan's $Q$-function and the birthday problem. Investigation into said combinatorial interpretation represents one interesting area of further study.

While we focus on the expectation of the area, one promising direction of future research is to examine the properties of the distribution of the areas. This topic has been more thoroughly studied for $(n,n)$-Dyck paths (see \cite{catalan_area_distribution} for more information). Many of the papers relating to this topic are primarily in the area of probability, rather than combinatorics, as the asymptotic area of an $(n,n)$-Dyck path can be viewed as the area of a particular Brownian excursion called the Bernoulli excursion (see Takács \cite{bernoulli_excursion}). Extending these results to the $(sn,n)$ case would be extremely useful.

\printbibliography

\end{document}